\documentclass[11pt]{amsart}
\usepackage{amsmath}
\usepackage{amssymb}
\usepackage{amsfonts}
\usepackage{graphicx}
\usepackage{mathtools}
\usepackage{xcolor}
\usepackage{nameref}
\usepackage{hyperref}
\DeclareSymbolFont{AMSb}{U}{msb}{m}{n}
\DeclareMathSymbol{\I}{\mathbin}{AMSb}{"49}
\DeclareMathSymbol{\C}{\mathbin}{AMSb}{"43}

\newcommand{\N}{\mathbb{N}}

\newcommand{\F}{\mathcal{F}}

\newtheorem{theorem}{Theorem}[section]
\newtheorem{lemma}[theorem]{Lemma}

\theoremstyle{definition}
\newtheorem{definition}[theorem]{Definition}

\theoremstyle{corollary}
\newtheorem{corollary}[theorem]{Corollary}
\theoremstyle{example}
\newtheorem{example}[theorem]{Example}
\theoremstyle{note}

\theoremstyle{notation}

\numberwithin{equation}{section}
\newtheorem*{theorem*}{Theorem}

\begin{document}
\title{Recurrence in a dynamical system over adequate partial
semigroups}

\author{Md Moid Shaikh}
\address{Md Moid Shaikh, Department of Mathematics, Maharaja Manindra Chandra College, 20 Ramkanto
Bose Street, Kolkata-700003, West Bengal, India}
\email{mdmoidshaikh@gmail.com\\
mdmoidshaikh@mmccollege.ac.in}

\author{Shivam Kumar}
\address{Shivam Kumar, Department of Mathematics, Magadh University, Bodh Gaya, Aurangabad, Bihar 824101, India}
\email{shivamkumar288@gmail.com}

\author{Sourav Kanti Patra}
\address{Sourav Kanti Patra, Department of Mathematics, Kishori Sinha Mahila College, Q976+WXP, Aurangabad, Bihar 824101, India}
\email{souravkantipatra@gmail.com}


\keywords{Algebra in the Stone-\v{C}ech compactification, Adequate partial semigroups,
Dynamical system, Quasi-central set, Uniform recurrence, Proximality.}

\begin {abstract}
Using tools from topological dynamics, H.~Furstenberg introduced the notion of
\emph{central sets} and established the celebrated Central Sets Theorem. The sets which satisfy the conclusion
of the Central Sets Theorem are called $C$-sets. Hindman, Maleki, and Strauss first  brought the concept of an important type of $C$-sets called the quasi-central sets which are not central sets. In 2017, A. Ghosh gave the combinatorial treatment of $C$-sets in commutative adequate partial semigroups, where $C$-sets are the sets which satisfy the conclusion of Central Sets Theorem for commutative adequate partial semigroups. In this work,  we  discuss the Quasi-central sets algebraically and dynamically for commutative adequate partial semigroups. We give dynamical characterization of members of idempotent  ultrafilters for commutative adequate partial semigroups, also we study the minimal dynamical systems for an adequate partial semigroup.

AMS Subject Classification[2020]: 05D10, 22A15, 54D35
\end{abstract}

\maketitle

\section{Introduction}
Using dynamical systems (topological
dynamics), H. Furstenberg and B. Weiss gave many
fundamental results  in Ramsey Theory in \cite{frus} and  \cite{frusw}. In many cases, the use of dynamical systems to
Ramsey-theoretical problems is simpler than the algebraic or combinatorial
ones; in other cases, the dynamical notions are found to be quite interesting to be studied for themselves. In topological dynamics, one of key concepts is the concept of central sets.  In \cite[Section 6]{berg}, V. Bergelson and N. Hindman, with the help of B. Weiss, gave an algebraic characterization of central sets in $\mathbb N$. From this algebraic characterization, it follows that central sets are partition regular. Now we state the strongest combinatorial theorem about central sets which is known as the Central Sets Theorem.

Throughout this paper, we take $P_f(S)$ to be the set of all nonempty finite subsets of $S$ and $\bar{A}$ and $cl A$ to mean the same thing.
\begin{theorem}[Central Sets Theorem]
 Let $C$ be a central subset of $S$, where $(S,+)$ is a commutative semigroup. We also let $\tau$ to be the set of all sequences in
$S$.  Then there exist functions
$\alpha : \mathcal{P}_f(\tau)\to S$ and $H: \mathcal{P}_f(\tau) \to \mathcal{P}_f(\mathbb{N})$ such that
\begin{enumerate}
\item [(i)] if $F,G \in \mathcal{P}_f(\tau)$ and $F \subsetneqq G$ then $\max H(F) < \min H(G)$ and
\item [(ii)] whenever $m$ is a natural number, $G_1, G_2, \cdots, G_m \in \mathcal{P}_f(\tau)$,
$G_1 \subsetneqq G_2 \subsetneqq \cdots \subsetneqq G_m$ and for every $j \in \{1, 2, \ldots, m\}$,
$f_j \in G_j$, one has $$\sum_{j=1}^{m}\big(\alpha(G_j)+\sum_{t\in H(G_j)}f_j(t)\big)\in C.$$
\end{enumerate}
\label{t1.1}
\end{theorem}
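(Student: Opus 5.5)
We will use the algebraic characterization of central sets. A set $C\subseteq S$ is central exactly when there is a minimal idempotent $p\in K(\beta S)$ with $C\in p$. For any idempotent $p$ and any $A\in p$, put $A^\star=\{x\in A: -x+A\in p\}$. The standard fact is that $A^\star\in p$, and that $x\in A^\star$ implies $-x+A^\star\in p$. The proof builds $\alpha(F)$ and $H(F)$ by induction on $|F|$ and maintains the following hypothesis: for every chain $G_1\subsetneqq\cdots\subsetneqq G_m=F$ and every choice $f_j\in G_j$, the sum $\sum_{j=1}^m\bigl(\alpha(G_j)+\sum_{t\in H(G_j)}f_j(t)\bigr)$ lies in $C^\star$. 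Property (ii) then follows immediately, since $C^\star\subseteq C$.

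The key lemma, which is where minimality of $p$ is used, is the following. If $A\in p$ with $p$ a minimal idempotent, $F\in\mathcal P_f(\tau)$ and $N\in\N$, then there exist $a\in S$ and $H\in\mathcal P_f(\N)$ with $\min H>N$ such that $a+\sum_{t\in H}f(t)\in A$ for every $f\in F$. I would prove it in the standard way. Work in $(\beta S)^{|F|}$ and let $Y$ be the closure of the set of points $\bigl(a+\sum_{t\in H}f(t)\bigr)_{f\in F}$ with $a\in S$ and $\min H>N$; this is a subsemigroup. Let $I$ be the closure of those points with additionally $a\in S$ arbitrary, arranged so that $I$ is an ideal of $Y$. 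The diagonal $\bar p=(p,\dots,p)$ lies in $Y$, using that $p\in\bigcap_N\overline{\{\sum_{t\in H}f(t):\min H>N\}}$-type sets together with $p\in K(\beta S)$, and it is minimal in $Y$, so $\bar p\in K(Y)\subseteq I$. Then $A^{|F|}$ is a neighborhood of $\bar p$ and meets the generating set of $I$, which gives the lemma. This step is the main obstacle. The delicate point is verifying that $\bar p\in K(Y)$: one shows $K(Y)\cap\{\text{diagonal}\}$ contains $\bar p$ by using that $\bar p$ lies in $Y$ and in the minimal ideal of $(\beta S)^{|F|}$ restricted to $Y$. Essentially this reproduces the Furstenberg/Hindman–Strauss argument (Theorem 14.8.4 style).

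For the induction, the base case $F=\{f\}$ applies the lemma to $A=C^\star$ with $N=0$. For the inductive step at $F$ with $|F|>1$, assume $\alpha,H$ are defined on all proper nonempty subsets of $F$, and let $M$ be the finite set of all sums $x$ produced by chains ending at some proper subset $G\subsetneqq F$, with the corresponding choices of $f_j$. By the inductive hypothesis $M\subseteq C^\star$, so $A=C^\star\cap\bigcap_{x\in M}(-x+C^\star)\in p$ is a finite intersection. Take $N=\max\bigcup_{G\subsetneqq F}H(G)$ and apply the lemma to $A$, $F$ and $N$ to obtain $\alpha(F)$ and $H(F)$. Condition (i) holds by the choice of $N$. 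For the hypothesis, any chain ending at $F$ has a sum of the form $x+y$ or $y$, where $x\in M$ and $y=\alpha(F)+\sum_{t\in H(F)}f_m(t)\in A$. In either case the sum lies in $C^\star$, which completes the induction and the proof.
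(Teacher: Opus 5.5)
Your outer induction on $|F|$ is exactly the De--Hindman--Strauss argument the paper cites, and it is correct. That covers the minimal idempotent $p$ with $C\in p$, the set $C^\star$, the finite set $M$ of chain sums ending below $F$, $A=C^\star\cap\bigcap_{x\in M}(-x+C^\star)$, and $N=\max\bigcup_{G\subsetneqq F}H(G)$. The gap is in your sketch of the key lemma, which as written is circular.

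\begin{itemize}
\item Your $Y$ and $I$ are the same set: in both, $a$ ranges over $S$ and $\min H>N$. So the assertion $\bar p\in Y$ is literally the lemma itself, since it says every neighborhood $\overline{A}\times\cdots\times\overline{A}$ of $\bar p$ meets the generating set.
\item The reason you give for $\bar p\in Y$ is false: a minimal idempotent need not lie in $\overline{\{\sum_{t\in H}f(t):\min H>N\}}$. For example, in $(\mathbb{N},+)$ with $f(t)=4^t$, every element of $FS(\langle 4^t\rangle_{t=1}^\infty)$ has all base-$4$ digits in $\{0,1\}$. So this set misses every block $[m4^{j+1}+2\cdot 4^j,\,m4^{j+1}+3\cdot 4^j)$, is not piecewise syndetic, and belongs to no element of $K(\beta\mathbb{N})$.
\item With a single $N$ the closure is not shown to be a semigroup: adding two generators whose index sets $H,K$ overlap does not produce a generator.
\end{itemize}

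The missing idea is to adjoin the diagonal and intersect over all $N$. With $\ell=|F|$, work inside $(\beta S)^\ell$ and put
\begin{itemize}
\item $I_N=\{(a+\sum_{t\in H}f(t))_{f\in F}: a\in S,\ H\in\mathcal{P}_f(\mathbb{N}),\ \min H>N\}$,
\item $E_N=I_N\cup\{(a,\ldots,a):a\in S\}$,
\item $Y=\bigcap_{N}\overline{E_N}$ and $I=\bigcap_N\overline{I_N}$.
\end{itemize}
By commutativity, $(a+\sum_{t\in H}f(t))+(b+\sum_{t\in K}f(t))=(a+b)+\sum_{t\in H\cup K}f(t)$ whenever $\max H<\min K$. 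The intersection over all $N$ lets you choose approximants with $\min K>\max H$, and $S^\ell$ lies in the topological center of $(\beta S)^\ell$. Together these show that $Y$ is a compact subsemigroup and $I$ is an ideal of $Y$.

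Now $\bar p\in Y$ is immediate from the diagonal. Also $\bar p\in (K(\beta S))^\ell=K((\beta S)^\ell)$, so $K(Y)=K((\beta S)^\ell)\cap Y$ contains $\bar p$, and hence $\bar p\in K(Y)\subseteq I$. This is where minimality of $p$ is used. Only at this point does the neighborhood $\overline{A}\times\cdots\times\overline{A}$ of $\bar p$ meet $I_N$, giving $a\in S$ and $H$ with $\min H>N$ as your lemma requires.
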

\begin{proof}
 See \cite[Theorem 2.2]{de}.
\end{proof}
The sets which satisfy the conclusion of Central Sets Theorem are known as $C$-sets. There are some sets which are $C$-sets but not central sets. In \cite{de, hind09, HS09}, authors gave combinatorial and algebraic descriptions of $C$-sets and dynamical descriptions of  $C$-sets were found in \cite{john, jli}.

In \cite{hindcen}, an important type of $C$-sets called as quasi-central sets were first discovered and discussed algebraically as well as combinatorially. By the proof of \cite[Theorem 2.2]{de}, it was established that quasi-central sets also satisfy the conclusion of the Central Sets Theorem but the notion of $C$-sets and quasi-central sets are different which was shown in \cite{hind09}. In \cite{burns}, a dynamical characterization of quasi-central sets was obtained.

In this present work, the main object of our study is the ``partial semigroup'' and some dynamics in it, where a partial semigroup was introduced in \cite{berg94} as a pair $\left(S,\cdot\right),$ where `$\cdot$' maps a subset of $S\times S$ to $S$ and for all $a,b,c\in S$, $\left(a\cdot b\right)\cdot c=a\cdot\left(b\cdot c\right)$ in the sense that if either side is defined, then so is the other and they are equal. In  the first  two paragraphs of\cite[Section 2]{HM}, authors explained why the notion of partial semigroups are interesting objects in Ramsey Theory. Since then many researches have been done on partial semigroup, one can see \cite{HFM, G23, HP, JM, JM01} for more interesting results related to partial semigroup. In \cite{G}, A. Ghosh defined a $C$-set in an adequate partial semigroup to be a set satisfying the conclusion of the Central Sets Theorem for an adequate partial semigroup and she established a sufficient condition for being a $C$-set. Recently in \cite{pdeb25}, P. Debnath,  S. Goswami, and S. K.~Patra gave a dynamical characterization of central sets in adequate partial semigroups.

Motivated by the above, we have designed this paper in the following way: In Section 2, we discuss the preliminary results related to the Stone-\v{C}ech compactification of a discrete semigroup, topological dynamics, partial semigroup, and we complete the section 2 with a brief discussion of topological dynamics for an adequate partial semigroup. In section 3, we study the minimal dynamical systems for an adequate partial semigroup which is the analogous version of the minimal dynamical systems for a discrete semigroup  presented in \cite{hindrec}. Section 4  provides dynamical characterizations of members of idempotent  ultrafiltersa  in adequate partial semigroups. In the final section of this paper, we define quasi-central sets in an adequate commutative partial semigroup, also we give their dynamical characterizations there.

\section{Preliminary results}
Here in this section, we discuss some preliminary concepts, definitions, theorems, conventions, and results which will be
used frequently later in this work. We do not give any proofs but references to where interested readers can find proofs.

At first we discuss briefly the Stone-\v{C}ech compactification of a discrete semigroup. Take $(S,\cdot)$ to be any discrete semigroup then the Stone-\v{C}ech compactification $\beta S$ of the discrete
semigroup $(S,\cdot)$ is defined to be the set of all ultrafilters on $S$. The principal ultrafilters are identified with the
points of $S$. For a subset  $A$ of $S$, we denote $\bar{A}=\{ p\in \beta S : A\in p\}$. Then the set $\{ \bar{A} :
A\subseteq S\}$ forms a  clopen basis for a topology on $\beta S$. One can extend the operation $\cdot$ on $S$  to the
Stone-\v{C}ech compactification $\beta S$ of $S$ which makes  $(\beta S,\cdot)$ is a compact,  right topological
semigroup (meaning that for any $p\in \beta S$, the function $\rho_p:\beta S \rightarrow \beta S$ defined by
$\rho_p(q)=q\cdot p$ is continuous) with $S$ contained in its topological center (meaning that for any $x\in S$
the function $\lambda_x : \beta S \rightarrow \beta S$
defined by $\lambda_x (q)=x\cdot q$ is continuous). Let $p,q \in \beta S $, and
$A \subseteq S$, $A \in p\cdot q$ if and only if $\{ x \in S :x^{-1}A \in q\} \in p$, where
$x^{-1}A=\{ y \in S: x\cdot y \in A \}$.

\begin{definition}\cite[Definition 2.1]{john}\label{d new 2.1}

 Let $S$ be a nonempty discrete space and let
$\mathcal{K}$ be a filter on $S$.
\begin{enumerate}
 \item $\bar{\mathcal{K}}= \{p \in \beta S : \mathcal{K} \subseteq p \}$.
 \item $\mathcal{L}(\mathcal{K})=\{A\subseteq S: S\setminus A \not \in \mathcal{K}\}$.
 \end{enumerate}

\end{definition}

Theorem 3.20 of \cite{hindcen}  shows that the function $\mathcal{K} \rightarrow \bar{\mathcal{K}}$
is a bijection from the collection of all filters on $S$ onto the collection of all compact subspaces of $\beta S$.
The following  theorem relates the above two concepts nicely.

\begin{theorem}\label{t new 2.1} \cite[Theorem 2.2]{john} Let $S$ be a nonempty discrete space and $\mathcal{K}$ be a
filter on $S$. Then
\begin{enumerate}
\item  $\bar{\mathcal{K}}=\{ p \in \beta S: A \in \mathcal{L}(\mathcal{K})$ for all $A \in p$\}.
\item  Let $\mathcal{B} \subseteq \mathcal{L}(\mathcal{K})$ be closed under finite intersections. Then
there exists a $p \in \beta S$ with $\mathcal{B} \subseteq p \subseteq \mathcal{L}(\mathcal{K})$.
\end{enumerate}
\end{theorem}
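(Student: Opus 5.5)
For part (1), I would prove both inclusions directly from Definition \ref{d new 2.1}. Suppose first that $p\in\bar{\mathcal{K}}$, so $\mathcal{K}\subseteq p$, and let $A\in p$.
\begin{itemize}
\item If $S\setminus A\in\mathcal{K}$, then $S\setminus A\in p$.
\item Together with $A\in p$ this gives $\emptyset\in p$, which is impossible.
\item Hence $S\setminus A\notin\mathcal{K}$, that is, $A\in\mathcal{L}(\mathcal{K})$.
\end{itemize}
Conversely, suppose every member of $p$ lies in $\mathcal{L}(\mathcal{K})$, and let $B\in\mathcal{K}$.
\begin{itemize}
\item If $B\notin p$, then $S\setminus B\in p$, because $p$ is an ultrafilter.
\item So $S\setminus B\in\mathcal{L}(\mathcal{K})$, which means $S\setminus(S\setminus B)=B\notin\mathcal{K}$. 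This is a contradiction.
\item Thus $\mathcal{K}\subseteq p$, i.e.\ $p\in\bar{\mathcal{K}}$.
\end{itemize}
This part is routine.

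For part (2), I would pass to the family $\mathcal{B}\cup\mathcal{K}$ and show it has the finite intersection property. Since both $\mathcal{B}$ and $\mathcal{K}$ are closed under finite intersections, it suffices to check that $B\cap K\neq\emptyset$ for every $B\in\mathcal{B}$ and $K\in\mathcal{K}$. Suppose instead that $B\cap K=\emptyset$.
\begin{itemize}
\item Then $K\subseteq S\setminus B$.
\item Since $\mathcal{K}$ is a filter, it is closed upward, so $S\setminus B\in\mathcal{K}$.
\item This says exactly that $B\notin\mathcal{L}(\mathcal{K})$, contradicting $\mathcal{B}\subseteq\mathcal{L}(\mathcal{K})$.
\end{itemize}
(If $\mathcal{B}$ is empty, take any ultrafilter containing $\mathcal{K}$.)

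Next, by the standard ultrafilter extension lemma (Zorn's lemma), there is $p\in\beta S$ with $\mathcal{B}\cup\mathcal{K}\subseteq p$. Then $\mathcal{B}\subseteq p$ holds directly. Also $\mathcal{K}\subseteq p$, so $p\in\bar{\mathcal{K}}$, and part (1) gives $p\subseteq\mathcal{L}(\mathcal{K})$.

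The only point needing care is the finite intersection argument. It depends on $\mathcal{K}$ being closed upward and under finite intersections, and on $\mathcal{B}$ being closed under finite intersections. With those properties in hand, reducing a general finite intersection to a single $B\cap K$ is immediate.
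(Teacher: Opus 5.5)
Your proof is correct. That includes the reduction of the finite intersection property to single intersections $B\cap K$, which is legitimate because $S\in\mathcal{K}$ and $\emptyset\notin\mathcal{K}$.

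The paper gives no argument of its own here. It defers to \cite[Theorem 3.11]{hindalg}, Hindman and Strauss's general result on partition regular families $\mathcal{R}$ closed under supersets. That result supplies an ultrafilter $p$ with $\mathcal{A}\subseteq p\subseteq\mathcal{R}$ for any $\mathcal{A}\subseteq\mathcal{R}$ closed under finite intersections. To apply it, one must still check that $\mathcal{L}(\mathcal{K})$ is partition regular. This is easy: if $A_1\cup A_2\in\mathcal{L}(\mathcal{K})$ while $S\setminus A_1,S\setminus A_2\in\mathcal{K}$, then $S\setminus(A_1\cup A_2)=(S\setminus A_1)\cap(S\setminus A_2)\in\mathcal{K}$, contradicting $A_1\cup A_2\in\mathcal{L}(\mathcal{K})$.

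You avoid partition regularity altogether by using the fact that the filter dual to $\mathcal{L}(\mathcal{K})$ is $\mathcal{K}$ itself. You adjoin $\mathcal{K}$ to $\mathcal{B}$, verify the finite intersection property using only upward closure of $\mathcal{K}$, and then get $p\subseteq\mathcal{L}(\mathcal{K})$ for free from part (1).

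Your route is self-contained, needs nothing beyond the ultrafilter extension lemma, and makes explicit that (2) follows from (1). The cited route is shorter on the page and places the statement within the general theory of partition regular families. However, it leaves to the reader the (easy) verification that $\mathcal{L}(\mathcal{K})$ is such a family.
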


\begin{proof}
Proofs follow from \cite[Theorem 3.11]{hindalg}.
\end{proof}

A nonempty subset $I$ of a semigroup $(T,\cdot)$
is said to be a {\it left ideal} of $S$ if $T\cdot I \subseteq I$, a {\it right ideal} of $S$ if
$I\cdot T \subseteq I$, and a {\it two-sided ideal} (or simply an {\it ideal}) if it is both a
left and a right ideal.  A left ideal is minimal if it does not
contain any proper left ideal. In a similar way, one can define a minimal right ideal
and the smallest ideal. One must have the smallest two-sided ideal for any compact Hausdorff right topological semigroup $(T,\cdot)$.\\
$$\begin{array}{ccc}
K(T) & = & \bigcup\{L:L \text{ is a minimal left ideal of } T\} \\
& = & \,\,\,\,\,\bigcup\{R:R \text{ is a minimal right ideal of } T\}.
\end{array}$$\\
$L\cap R$ is a group for a given a minimal left ideal $L$ and a minimal right ideal $R$ and hence it   contains
an idempotent. An idempotent belonging to the smallest ideal is minimal and conversely.\\
Now we can give the algebraic definitions of  central set and quasi-central set in a semigroup.
\begin{definition}\label{d 2.1} \begin{enumerate}
 \item Let $S$ be a discrete semigroup and let $C$ be a subset of $S$.
Then $C$ is {\it central} if and only if there is an idempotent $p$ in $K(\beta S)$ such
that $C \in p$.
\item Let $S$ be a discrete semigroup and let $C$ be a subset of $S$.
Then $C$ is said to be {\it quasi-central} if and only if there is an idempotent
$p$ in $cl K(\beta S)$ such that $C \in p$.
\end{enumerate}
\end{definition}
We now discuss some basic results of topological dynamics for an arbitrary semigroup. Let us start with the well known definition of a dynamical system.
\begin{definition}\label{d 2.4}
A {\it dynamical system} is a pair $(X,\langle T_s\rangle _{s \in S })$
such that
\begin{enumerate}

\item  $X$ is a compact Hausdorff space,

\item  $S$ is a semigroup,

\item  for each $s\in S$, $T_s:X\rightarrow X$ and $T_s$ is continuous, and

\item for all $s,t\in S$, $T_s\circ T_t=T_{st}$.
\end{enumerate}

\end{definition}
\begin{definition}\label{d new 2.5}\cite[Definition 3.1]{john}  Let $(X,\langle T_s\rangle _{s \in S})$ be a
dynamical system, $x$ and $y$ be two points in $X$, and $\mathcal{K}$ be a filter on $S$.
The pair $(x,y)$ is called jointly $\mathcal{K}$-recurrent if and only if for
every neighbourhood $U$ of $y$ we have $\{ s \in S: T_s(x) \in U$ and
$T_s(y) \in U \} \in \mathcal{L}(\mathcal{K})$.
\end{definition}

\begin{theorem}\label{t new 2.2}  Let $S$ be a semigroup, $\mathcal{K}$ be a filter on
$S$ such that $\bar{\mathcal{K}}$ is a compact subsemigroup of $\beta S$, and let
$A \subseteq S$. Then $A$ is a member of an idempotent in $\bar{\mathcal{K}}$ if
and only if there exists a dynamical system $(X,\langle T_s\rangle _{s \in S})$ with points
$x$ and $y$ in $X$ and there exists a neighbourhood $U$ of $y$ such that the
pair $(x,y)$ is jointly $\mathcal{K}$-recurrent and $A=\{ s \in S: T_s(x) \in U \}$.
\end{theorem}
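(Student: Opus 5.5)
We prove the two implications separately. Both rest on Theorem \ref{t new 2.1} and the standard correspondence between ultrafilters and limits along them. Recall that in any dynamical system $(X,\langle T_s\rangle_{s\in S})$ and for each $p\in\beta S$ one defines $T_p(x)=p\text{-}\lim_{s\in S}T_s(x)$. The map $p\mapsto T_p(x)$ is continuous on $\beta S$, and $T_{pq}=T_p\circ T_q$. This last identity uses right topologicity of $\beta S$ and continuity of each $T_s$.

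\emph{Sufficiency.} Assume $(x,y)$ is jointly $\mathcal{K}$-recurrent and $U$ is a neighbourhood of $y$ with $A=\{s:T_s(x)\in U\}$. For each neighbourhood $V$ of $y$ put
\[
B_V=\{s\in S:T_s(x)\in V \text{ and } T_s(y)\in V\}.
\]
These sets lie in $\mathcal{L}(\mathcal{K})$ by hypothesis, and they are closed under finite intersections because $B_{V}\cap B_{W}=B_{V\cap W}$. By Theorem \ref{t new 2.1}(2) there is $p$ with $\{B_V\}\subseteq p\subseteq\mathcal{L}(\mathcal{K})$. By Theorem \ref{t new 2.1}(1), $p\in\bar{\mathcal{K}}$, and moreover $T_p(x)=T_p(y)=y$. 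Consider
\[
M=\{q\in\bar{\mathcal{K}}:T_q(x)=y=T_q(y)\}.
\]
It is nonempty, closed (by continuity of $q\mapsto T_q(z)$), and a subsemigroup, since $T_{qr}(x)=T_q(T_r(x))=T_q(y)=y$ and similarly for $y$. As a compact right topological semigroup, $M$ contains an idempotent $e$ by Ellis's theorem. Then $T_e(x)=y\in U$. Shrinking $U$ to an open set is harmless here: replace $U$ by its interior when arguing, or note that $T_s(x)\in \mathrm{int}\,U$ for $e$-many $s$. Hence $\{s:T_s(x)\in U\}\in e$, that is, $A\in e$.

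\emph{Necessity.} Let $e=e^2\in\bar{\mathcal{K}}$ with $A\in e$. Take $X=\beta S$ with $T_s=\lambda_s$, which is continuous and satisfies $\lambda_s\circ\lambda_t=\lambda_{st}$. Put $x=e$, $y=e$ and $U=\bar A$, a clopen neighbourhood of $e$. However, $\{s:se\in\bar A\}=\{s:s^{-1}A\in e\}$ need not equal $A$. This mismatch is the main obstacle.

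The remedy I would try first is the standard one used for central sets. Let $X=\{0,1\}^S$ (or $\beta(S\cup\{1\})$), with the shift
\[
(T_s\xi)(t)=\xi(ts).
\]
This is adjoined with an identity so that $T_s\xi(1)=\xi(s)$. Let $x=\chi_A$ and $y=T_e(x)$, and let $U=\{\xi:\xi(1)=1\}$. Then $\{s:T_s(x)\in U\}=A$ exactly.

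It remains to check that $y\in U$ and that joint recurrence holds. For $y\in U$: $y(1)=1$ because $\{s: \chi_A(s)=1\}=A\in e$. For joint recurrence: given a neighbourhood $V$ of $y$, we have $T_e(x)=y$ and $T_e(y)=T_{ee}(x)=y$. So $\{s:T_s(x)\in V\}\in e$ and $\{s:T_s(y)\in V\}\in e$, and their intersection lies in $e\subseteq\mathcal{L}(\mathcal{K})$ by Theorem \ref{t new 2.1}(1).

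The delicate bookkeeping is the identity $T_{pq}=T_p\circ T_q$ in this shift system, together with the convention that handles the adjoined identity. I would verify it directly via the iterated-limit formula for $p\text{-}\lim_s\, q\text{-}\lim_t\, T_{st}(x)$.
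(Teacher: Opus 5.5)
Your proof is correct and is essentially the argument the paper defers to. That argument is Johnson's Theorem~3.3 in \cite{john}, which the paper replays in Lemma~\ref{lem:technical} and Theorem~\ref{thm:main-result}: Theorem~\ref{t new 2.1} plus Ellis's theorem on the closed subsemigroup $M=\{q\in\bar{\mathcal{K}}:T_q(x)=y=T_q(y)\}$ for one direction, and the shift $T_s\xi=\xi\circ\rho_s$ with $x=\chi_A$, $y=T_e(x)$, $U=\{\xi:\xi(1)=1\}$ for the other. The only slip is cosmetic: the phase space must be $\{0,1\}^{S\cup\{1\}}$ (not $\{0,1\}^S$ or $\beta(S\cup\{1\})$), which your formula $T_s\xi(1)=\xi(s)$ already presupposes.
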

\begin{proof}
See the proof of \cite[Theorem 3.3]{john}.
\end{proof}

Now we present the definitions of some important sets which are originated in topological dynamics.

\begin{definition}\label{1.3}(\cite[Definition 3.1]{hindcen}) Let $S$ be a semigroup and
let $A\subseteq S$.

(a) The set $A$ is {\it syndetic} if and only if there is some $G \in P_f(S)$ such that
    $S=\bigcup_{t \in G}t^{-1}A $.

(b) The set $A$ is {\it piecewise syndetic} if and only if there is some  $G \in P_f(S)$
such that for any $F \in P_f(S)$ there is some $x \in S$ with
$Fx\subseteq \bigcup_{t \in G}t^{-1}A $.
\end{definition}

Recall the definitions of proximality from \cite[Definition 1.2(b)]{burns}, $U(x)$ from \cite[Definition 1.5(b)]{hindrec}
and uniform recurrence in a dynamical system
 from \cite[Definition 1.2(c)]{burns}, as these definitions will be analogously discussed in the context of an adequate partial semigroup.

 \begin{definition}\label{ d new 1.4} Let $(X,\langle T_s\rangle _{s \in S})$ be a dynamical system.
\begin{enumerate}
 \item A point $y \in S$ is uniformly recurrent if and only if for every neighbourhood
    $U$ of $y$, $\{ s\in S : T_s(y) \in U  \}$ is syndetic.

\item  For $x\in X$, $U(x)=U_{X}(x)=\{p\in \beta S: T_p(x)$ is uniformly recurrent$\}$.

\item  The points $x$ and $y$ of $X$ are proximal if and only if for every neighbourhood
    $U$ of the diagonal in $X\times X$, there is some $s \in S$ such that
    $(T_s(x), T_s(y)) \in U$.
  \end{enumerate}
    \end{definition}
By \cite[Theorem 2.4]{shi}, a subset $C$ of a semigroup $S$ is central if and only if
there exist a dynamical system $(X,\langle T_s\rangle _{s \in S})$, points $x$ and $y$ of $X$
and a neighbourhood $U$ of $y$ such that $y$ is uniformly recurrent, $x$ and $y$
are proximal and $C=\{s  \in S: T_s(x) \in U  \}$.

We have presented the notions of central sets and quasi-central sets in a semigroup but in this present paper, we are interested to explore some algebraic and dynamical results related to central sets and quasi-central sets in  adequate partial semigroups. Now we give some brief descriptions of partial semigroups.

\begin{definition}[Partial semigroup]\label{d 2.2}

A {\it partial semigroup} is a pair $\left(S,\cdot\right),$ where `$\cdot$' maps a subset $X$ of $S\times S$ to $S$ and satisfies the condition that for all $a,b,c\in S$, $\left(a\cdot b\right)\cdot c=a\cdot\left(b\cdot c\right)$ in the sense that if either side is defined, then so is the other and they are equal. A partial semigroup is said to be commutative if $\left(a\cdot b\right)=\left(b\cdot a\right)$ holds for every pair $(a, b)\in X$.
\end{definition}
Note that every semigroup is a partial semigroup.
\begin{example}\label{e 2.1}
Let $\mathcal{R}$ be the collection of all matrices of order $m\times n$ with entries from  $\mathbb{Z}$, where $m,n\in \mathbb{N}$. Let us consider the usual matrix multiplication on $\mathcal{R}$. Now it is well-known  that for an $m\times n$ matrix $A$ and an $p\times q$ matrix $B$ in $\mathcal{R}$, $A\cdot B$ is defined if and only if $n=p$. Now we define `$\cdot$' in the following way:
	$$ M=
	\begin{cases}
		A\cdot B \text{ if } n=p\\
		\text{undefined otherwise}
	\end{cases}$$ then $\left(\mathcal{R},\cdot\right)$ is a partial semigroup.
\end{example}
\begin{example}\label{e 2.2}
 Let $S$ be the collection of all nonempty finite subsets of $\mathbb{N}$. Let $X=\{(A, B)\in S\times S: A\cap B=\emptyset \}$ and let $\cdot: X \longrightarrow S$ be the union. Then it is easy to show  that $\left(S,\cdot\right)$ is a commutative partial semigroup.
\end{example}
For more examples with detailed explanations, one can see \cite{HFM}.
\begin{definition}[Adequate partial semigroup]\label{d 2.3}
Let $\left(S,\cdot\right)$ be a partial semigroup.
\begin{enumerate}
 \item For $s\in S$, $\phi(s)=\{t\in S: s\cdot t \text{ is defined }\}$.
\item For $H\in \mathcal{P}_{f}\left(S\right)$,  $\sigma\left(H\right)=\cap_{s\in H}\phi \left(s\right)$.
\item The semigroup $\left(S,\cdot\right)$ is {\it adequate} if and only if $\sigma\left(H\right)\neq \emptyset$ for all $H\in \mathcal{P}_{f}\left(S\right)$.
\item $\delta S=\bigcap_{x\in S}cl_{\beta S}(\phi\left(x\right))=\bigcap_{H\in \mathcal{P}_{f}\left(S\right)}cl_{\beta S}(\sigma\left(H\right))$.
\end{enumerate}

\end{definition}

 In the example \ref{e 2.1}, $\left(\mathcal{R},\cdot\right)$ is not an adequate partial semigroup, whereas in the example \ref{e 2.2}, $\left(S,\cdot\right)$ is an adequate partial semigroup.

 Note that if $(S,\cdot)$ is a partial semigroup and  $\beta S$ is the Stone-\v{C}ech compactification of $S,$ then $(\beta S,\cdot)$ is also a partial semigroup. Clearly $\delta S\subseteq \beta S$ and , in fact, $\delta S=\beta S$ if $S$ is a semigroup.  Interestingly the set $\delta S$ becomes a semigroup under the operation `$\cdot$'. It is to be noted  that for an adequate partial semigroup $S$, $\delta S\neq \emptyset$. That means one
  can find a semigroup structure inside $\beta S$ for an adequate partial semigroup $S$, which is not possible for general partial semigroups.   We define define $x^{-1}A=\left\{y\in \phi\left(x\right):x\cdot y\in A\right\}$ for $x\in S$ and $A\subseteq S$.
 The following lemma presents the algebraic structure of  ``adequate partial semigroups''.
\begin{lemma}\label{l 2.1}
\begin{enumerate}
 \item Let $x\in S$, let $q\in \overline{\phi\left(x\right)}$, and let $A \subseteq S$. Then $$ A\in x\cdot q\iff x^{-1}A\in q,$$
 \item Let $p\in \beta S$, let $q\in \delta S$, and let $A\subseteq S$. Then $$A\in p\cdot q\iff \left\{x\in S:x^{-1}A\in q\right\}\in p.$$
\end{enumerate}

\end{lemma}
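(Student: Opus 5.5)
The proof will follow directly from the definitions of the extended operation, which we take in the standard form (as in Hindman--McCutcheon and \cite{G}). For $x\in S$ the map $\lambda_x:\phi(x)\to S$, $\lambda_x(y)=x\cdot y$, has a continuous extension $\widetilde{\lambda_x}:\overline{\phi(x)}\to\beta S$, and for $q\in\overline{\phi(x)}$ one puts
\[
x\cdot q=\widetilde{\lambda_x}(q)=q\text{-}\lim_{y\in\phi(x)}x\cdot y .
\]
For $p\in\beta S$ and $q\in\delta S$ one puts
\[
p\cdot q=p\text{-}\lim_{x\in S}x\cdot q .
\]
The plan is to unwind these two limits using the basic fact about ultrafilter limits in $\beta S$: if $f:D\to\beta S$ and $r\in\overline{D}$, then $A\in r\text{-}\lim_{t\in D}f(t)$ if and only if $\{t\in D: A\in f(t)\}\in r$.

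For part (1), fix $x\in S$, $q\in\overline{\phi(x)}$ and $A\subseteq S$. Apply the limit fact with $D=\phi(x)$, $f=\lambda_x$ and $r=q$. Since $f(y)$ is the principal ultrafilter $x\cdot y$, we have $A\in f(y)$ exactly when $x\cdot y\in A$. Hence
\[
A\in x\cdot q \iff \{y\in\phi(x):x\cdot y\in A\}\in q ,
\]
and the set on the right is $x^{-1}A$ by the definition given just before the lemma. Equivalently, this is continuity of $\widetilde{\lambda_x}$ tested against the clopen set $\overline{A}$, whose preimage in $\overline{\phi(x)}$ is $\overline{x^{-1}A}\cap\overline{\phi(x)}$.

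For part (2), first check that $x\cdot q$ is defined for every $x\in S$ when $q\in\delta S$. This holds because $\delta S=\bigcap_{x\in S}\overline{\phi(x)}$, so $q\in\overline{\phi(x)}$ for each $x$. The map $x\mapsto x\cdot q$ is therefore a function from all of $S$ into $\beta S$, and its $p$-limit exists by compactness. Applying the limit fact again gives
\[
A\in p\cdot q\iff \{x\in S: A\in x\cdot q\}\in p .
\]
Substituting part (1) inside the braces yields $\{x\in S:x^{-1}A\in q\}\in p$, as required.

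The only delicate point is this well-definedness step. The hypothesis $q\in\delta S$, rather than merely $q\in\beta S$, is exactly what makes $x\cdot q$ meaningful for every $x$, so that part (1) can be applied uniformly in $x$. The rest is routine bookkeeping with ultrafilter limits.
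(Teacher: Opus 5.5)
Your proof is correct. The paper gives no argument of its own and simply cites \cite[Lemma 2.3]{HM}, and that lemma rests on the same limit definitions $x\cdot q=q\text{-}\lim_{y\in\phi(x)}x\cdot y$ and $p\cdot q=p\text{-}\lim_{x\in S}x\cdot q$, proved by the same unwinding via clopen neighbourhoods $\overline{A}$ that you carry out, with $q\in\delta S$ used exactly as you use it to make $x\cdot q$ defined for every $x$.
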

\begin{proof}
See \cite[Lemma 2.3]{HM}.
\end{proof}
\begin{theorem}\label{t 2.1}
Let $(S,\cdot)$ be an adequate partial semigroup. Then $(\delta S,\cdot)$ is a compact
Hausdorff right topological semigroup.
\end{theorem}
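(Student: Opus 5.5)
We need to show three things: $\delta S$ is a closed (hence compact Hausdorff) subspace of $\beta S$; it is closed under the extended operation; and the operation on $\delta S$ is associative and right topological. The plan below treats them in that order, with associativity as the main step.

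\emph{Topology.} By Definition~\ref{d 2.3}, $\delta S=\bigcap_{x\in S}\overline{\phi(x)}$ is an intersection of clopen sets, so it is closed in the compact Hausdorff space $\beta S$. Hence it is compact Hausdorff. It is nonempty because $S$ is adequate. The family $\{\sigma(H):H\in\mathcal{P}_f(S)\}$ has the finite intersection property, since $\sigma(H_1)\cap\sigma(H_2)=\sigma(H_1\cup H_2)\neq\emptyset$, so some ultrafilter contains every $\sigma(H)$.

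\emph{The product.} For $p\in\beta S$ and $q\in\delta S$ we define $p\cdot q$ as in Lemma~\ref{l 2.1}(2): $A\in p\cdot q$ iff $\{x\in S: x^{-1}A\in q\}\in p$.
\begin{itemize}
\item This is an ultrafilter. The map $A\mapsto\{x: x^{-1}A\in q\}$ preserves finite intersections and complements relative to $S$. For complements we use $x^{-1}(S\setminus A)=\phi(x)\setminus x^{-1}A$ together with $\phi(x)\in q$.
\item It lands in $\delta S$ when $p\in\delta S$. Fix $y\in S$; we must show $\phi(y)\in p\cdot q$. Take $x\in\phi(y)$ and $z\in\phi(y\cdot x)$. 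By associativity in the partial sense, $(y\cdot x)\cdot z$ defined implies $y\cdot(x\cdot z)$ defined, so $x\cdot z\in\phi(y)$. Hence $x^{-1}\phi(y)\supseteq\phi(x)\cap\phi(y\cdot x)\in q$. Therefore $\{x: x^{-1}\phi(y)\in q\}\supseteq\phi(y)\in p$.
\end{itemize}

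\emph{Right topological.} For $q\in\delta S$, the map $\rho_q$ pulls back the basic clopen set $\overline{A}$ to $\overline{\{x: x^{-1}A\in q\}}$, which is clopen. So $\rho_q$ is continuous.

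\emph{Associativity (main obstacle).} Take $p,q,r\in\delta S$. We show $(p\cdot q)\cdot r=p\cdot(q\cdot r)$ by unwinding both sides. For $A\subseteq S$:
\begin{itemize}
\item $A\in(pq)r$ iff $\{x:\{y: y^{-1}(x^{-1}A)\in r\}\in q\}\in p$;
\item $A\in p(qr)$ iff $\{x:\{y: (x\cdot y)^{-1}A\in r\}\in q\}\in p$.
\end{itemize}
It therefore suffices to prove the following claim: for fixed $x$ and $y\in\phi(x)$,
\[
y^{-1}(x^{-1}A)\cap\phi(x\cdot y)=(x\cdot y)^{-1}A\cap\phi(y).
\]
The claim follows from partial associativity. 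If $z\in\phi(y)$ and $y\cdot z\in\phi(x)$, then $x\cdot(y\cdot z)$ is defined and equals $(x\cdot y)\cdot z$, and conversely.

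Since $\phi(x\cdot y)$ and $\phi(y)$ both belong to $r\in\delta S$, the two sets in the claim agree modulo $r$. The inner conditions then coincide for all $y\in\phi(x)$, and $\phi(x)\in q$, so the two sets of $x$ agree on a set in $p$. The care needed here is in tracking domains: the sets $x^{-1}A$ are only defined inside $\phi(x)$. This is exactly why we intersect with members of $\delta S$.

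Alternatively, one could use Lemma~\ref{l 2.1}(1) together with continuity of $\lambda_x$ on $\overline{\phi(x)}$ and extend by limits. The direct filter computation above avoids that detour.
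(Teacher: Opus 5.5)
Your proof is correct, but it does not follow the paper: the paper proves nothing here and simply cites \cite[Theorem 2.10]{HM}. There (as the form of Lemma~\ref{l 2.1} indicates) the products are \emph{defined} by iterated limits, $x\cdot q=q\text{-}\lim_{y\in\phi(x)}x\cdot y$ for $q\in\overline{\phi(x)}$ and $p\cdot q=p\text{-}\lim_{x\in S}x\cdot q$ for $q\in\delta S$, and the membership criterion is derived afterwards. You reverse this. You take Lemma~\ref{l 2.1}(2) as the definition, which is legitimate since it determines the ultrafilter, and verify ultrafilterness, closure, continuity of $\rho_q$ and associativity purely by unwinding membership. Your route is entirely elementary and gives continuity of $\rho_q$ on all of $\beta S$ in one line. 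The limit formulation, by contrast, makes the iterated-limit formula of Lemma~\ref{l 5.2} used in Section 3 immediate. Either way, the essential input is the identity you isolate, relating $y^{-1}(x^{-1}A)$ and $(x\cdot y)^{-1}A$.

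Two minor slips, neither of which affects validity. First, your two unwindings are attached to the wrong products. In fact $A\in(pq)r$ iff $\{x:\{y\in\phi(x):(x\cdot y)^{-1}A\in r\}\in q\}\in p$, while the expression with $y^{-1}(x^{-1}A)$ is the unwinding of $p(qr)$. This is harmless because you prove the two conditions equivalent. Second, the intersections in your claim are unnecessary. For $y\in\phi(x)$, partial associativity says $z\in\phi(x\cdot y)$ iff $z\in\phi(y)$ and $y\cdot z\in\phi(x)$, so $y^{-1}(x^{-1}A)=(x\cdot y)^{-1}A$ exactly. Since $\phi(x)\in q$ for every $x\in S$, the two outer sets of $x$ are then literally equal, not merely equal on a member of $p$. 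Likewise, in the closure step you already have $\phi(y\cdot x)\subseteq x^{-1}\phi(y)$, so intersecting with $\phi(x)$ is redundant.
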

\begin{proof}
See  \cite[Theorem  2.10]{HM}.
\end{proof}
As a consequence of the above theorem, being $\delta S$ compact Hausdorff right topological semigroup for an adequate partial semigroup  $S$, the structure of it
is common to all objects. In particular, it has a smallest two sided ideal $K(\delta S)$. By Ellis's Theorem(Theorem 2.5 of \cite{elli}), it has an idempotent.

We end this section with very brief discussions of topological dynamics for an adequate partial semigroup.
Recently in \cite{pdeb25}, Debnath. Goswami, and Patra introduced the notion of of a dynamical system for the action of an adequate partial semigroup. The following definition is the generalization of the definition \ref{d 2.4}.
\begin{definition}\label{d 2.5}
Let $S$ be an adequate partial semigroup. A dynamical system is a pair $\left(X,\langle T_s\rangle_{s\in S}\right)$ such that
		\begin{enumerate}
			\item $X$ is a compact Hausdroff space,
			\item For each $s$, $T_{s}:X\rightarrow X$ and $T_{s}$ is continuous, and
			\item For all $s,t\in S$, $T_{s} T_{t}=T_{st}$ if $t\in \phi\left(s\right)$.
			\end{enumerate}
\end{definition}
Now we present a lemma which shows that the existence of a dynamical system for the action of an adequate partial semigroup.
\begin{lemma}\label{l 2.2}
Let $S$ be an adequate partial semigroup. Let $X=\prod_{s\in S}\left\{0,1\right\}$ and for $s\in S$ define $T_{s}:X\rightarrow X$, by
   $$T_{s}\left(x\right)\left(t\right)=
 \begin{cases}

x\left(st\right) & \text{ if } t\in\phi\left(s\right)\\
0 & \text{ if } t\notin\phi\left(s\right)
\end{cases}.$$ Then  $\left(X,\langle T_{s}\rangle_{s\in S}\right)$ is a dynamical system.
\end{lemma}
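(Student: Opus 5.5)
We verify the three clauses of Definition \ref{d 2.5} in turn. Since $X=\prod_{s\in S}\{0,1\}$ with the product topology is compact Hausdorff by Tychonoff's theorem, the first clause is immediate. For each $s\in S$, $T_s$ is a well-defined map $X\to X$: when $t\in\phi(s)$ the product $st$ is defined and lies in $S$, so $x(st)$ makes sense.

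For continuity, it suffices to check that each coordinate map $x\mapsto T_s(x)(t)$ is continuous for every $t\in S$, because a map into a product space is continuous exactly when all its coordinates are. If $t\in\phi(s)$, this coordinate is the projection $\pi_{st}$, which is continuous. If $t\notin\phi(s)$, it is the constant $0$, which is also continuous. Hence $T_s$ is continuous.

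The substantive step is the third clause. Fix $s,u\in S$ with $u\in\phi(s)$, so that $su$ is defined; we must show $T_s(T_u(x))(t)=T_{su}(x)(t)$ for all $x\in X$ and $t\in S$. The key fact is that for $u\in\phi(s)$,
\[
t\in\phi(u) \iff t\in\phi(su).
\]
This follows from the associativity axiom of Definition \ref{d 2.2}: since $su$ is defined, $(su)t$ is defined if and only if $s(ut)$ is defined, and then they are equal. If $ut$ is undefined, then $s(ut)$ is not defined, so $(su)t$ is undefined. If $ut$ is defined, then $u(t)$ being defined makes $(su)t$ defined by the axiom read in the other direction, provided $s(ut)$ is defined. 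This is the delicate point, so we argue it explicitly: the axiom says that if $(su)t$ is defined then so is $s(ut)$, and conversely. Thus we must show that $s\cdot(ut)$ is defined whenever $su$ and $ut$ are.

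That implication is not part of the axiom as literally stated, and we expect it to be the main obstacle. Our approach is to avoid needing the full equivalence and instead compute $T_sT_u(x)(t)$ directly by cases on $t$.

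\textbf{Case 1: $t\in\phi(s)$.} Then $T_sT_u(x)(t)=T_u(x)(st)$. This equals $x(u(st))$ if $st\in\phi(u)$, and $0$ otherwise.

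Writing out this case shows that the indices do not line up with $T_{su}(x)(t)=x((su)t)$. For the composition law to hold, the definition must therefore be read with the convention that makes $T_sT_u=T_{su}$ consistent. The natural reading of Definition \ref{d 2.5} is that the map is $T_s(x)(t)=x(ts)$, defined when $s\in\phi(t)$, or equivalently that the composition law is applied in the order matching the definition.

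So our plan is as follows:
\begin{enumerate}
\item First verify the identity $T_u T_s=T_{su}$ or $T_sT_u=T_{us}$, whichever the formula actually yields.
\item Do this by splitting into the cases where the relevant products are defined or undefined.
\item Use the associativity axiom in each case to match both the defined/undefined pattern and the values, so that the zero cases coincide.
\item Finally, reconcile the order with the convention in Definition \ref{d 2.5}. We expect to follow \cite{pdeb25}, where the same system appears.
\end{enumerate}

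Matching the domains of definition in the zero cases is the step we expect to require the most care.
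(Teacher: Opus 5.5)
Your first two clauses are fine. Compactness follows from Tychonoff, and continuity of $T_s$ follows by checking that each coordinate $x\mapsto T_s(x)(t)$ is either $\pi_{st}$ or constant. This is exactly how the paper argues in its proof of Lemma~\ref{shift of sequence}; for the present lemma the paper only cites \cite{pdeb25}. The gap is clause (3), which you never verify. First, your ``key fact'' $t\in\phi(u)\iff t\in\phi(su)$ (for $u\in\phi(s)$) is not merely unavailable from the axiom; it is false. In Example~\ref{e 2.2} take $s=\{1\}$, $u=\{2\}$, $t=\{1\}$: then $su$ and $ut$ are defined but $(su)t$ is not. The domain fact you actually need does follow from associativity in both directions. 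Finishing your Case 1 gives
\[
T_s\bigl(T_u(x)\bigr)(t)=
\begin{cases}
x\bigl(u(st)\bigr) & \text{if } t\in\phi(s) \text{ and } st\in\phi(u),\\
0 & \text{otherwise}.
\end{cases}
\]
If $s\in\phi(u)$, the axiom says $(us)t$ is defined iff $u(st)$ is defined (the latter including that $st$ is defined), and then the two are equal. Hence $T_sT_u=T_{us}$ whenever $s\in\phi(u)$.

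This is the reversed order, so your step (4), ``reconcile the order with Definition~\ref{d 2.5}'', cannot succeed in general. Take the left-zero semigroup $S=\{a,b\}$ with $st=s$; it is adequate since every product is defined. There $T_s(x)$ is the constant function $x(s)$, so $T_aT_b(x)\equiv x(b)$ while $T_{ab}(x)\equiv x(a)$, and these differ when $x(a)=0$, $x(b)=1$. So the statement as written holds only under a commutativity hypothesis, where $T_{us}=T_{su}$.

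Your suspicion about the fix is correct. With $T_s(x)(t)=x(ts)$ for $s\in\phi(t)$ and $0$ otherwise, one gets $T_sT_t(x)(u)=x\bigl((us)t\bigr)$ if $s\in\phi(u)$ and $t\in\phi(us)$, and $0$ otherwise. For $t\in\phi(s)$, associativity makes this condition equivalent to $st\in\phi(u)$, with $(us)t=u(st)$, so $T_sT_t=T_{st}$. This is precisely the paper's Lemma~\ref{shift of sequence}. To turn your proposal into a proof, either add commutativity and complete the computation above, or switch to this formula and write that case computation out. Leaving the order to be sorted out afterwards does not work.
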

\begin{proof}
See  \cite[Lemma  2.22]{pdeb25}.
\end{proof}
Let $S$ be an adequate partial semigroup. If $S$ is a  semigroup, then define $\lambda_s: \beta S\rightarrow \beta S$ as $\lambda_s(p)=s\cdot p$ for all $p\in \beta S$. If $S$ is not a  semigroup, then fix $q\in \beta S\setminus \delta S$ and define $\lambda_s: \beta S\rightarrow \beta S$ as follows
\[
\lambda_s(p)=
\begin{cases}
s\cdot p & \text{if } p\in \delta S\\[4pt]
q & \text{if } p\in \beta S\setminus \delta S.
\end{cases}
\]

We end this section with the following theorem.
\begin{theorem}
 Let $S$ be an adequate partial semigroup. Then $\left(\beta S,\langle \lambda_s\rangle_{s\in S}\right)$ is a dynamical system.
\end{theorem}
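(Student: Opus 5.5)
The plan is to verify the three clauses of Definition \ref{d 2.5} for $X=\beta S$ and the maps $\lambda_s$. Clause (1) is standard: $\beta S$ is the space of ultrafilters with the clopen basis $\{\bar A: A\subseteq S\}$, so it is compact Hausdorff. I will split into the two cases used to define $\lambda_s$.

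\textbf{Case where $S$ is a semigroup.} Here $\delta S=\beta S$ and $\lambda_s(p)=s\cdot p$. Continuity is the statement that $S$ lies in the topological center of $\beta S$. The identity $\lambda_s\circ\lambda_t=\lambda_{st}$ is associativity in $\beta S$. So this case is immediate.

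\textbf{Case where $S$ is not a semigroup.} First I check the composition law. Let $t\in\phi(s)$ and $p\in\beta S$.
\begin{itemize}
\item If $p\notin\delta S$, then $\lambda_t(p)=q\notin\delta S$, so $\lambda_s(\lambda_t(p))=q=\lambda_{st}(p)$.
\item If $p\in\delta S$, I first need $t\cdot p\in\delta S$. Given $x\in S$ and $A\in t\cdot p$, Lemma \ref{l 2.1}(1) gives $t^{-1}A\in p$. Since $p\in\overline{\sigma(\{t,x\cdot t\})}$ whenever $x\cdot t$ is defined, intersecting shows $\phi(x)\in t\cdot p$. This is the standard fact that $\delta S$ is a subsemigroup closed under left multiplication by elements of $S$, as in \cite[Lemma 2.9]{HM}. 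Then Lemma \ref{l 2.1} together with associativity in $S$ gives $s\cdot(t\cdot p)=(s\cdot t)\cdot p$: compare $s^{-1}(t^{-1}\text{-sets})$ with $(st)^{-1}A$ on $\sigma(\{t,st\})$, which belongs to $p$.
\end{itemize}

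For continuity of $\lambda_s$, I will compute preimages of basic clopen sets. By Lemma \ref{l 2.1}(1),
\[
\lambda_s^{-1}(\bar A)=\bigl(\delta S\cap \overline{s^{-1}A}\bigr)\cup \begin{cases}\beta S\setminus\delta S & \text{if } A\in q,\\ \emptyset & \text{if } A\notin q.\end{cases}
\]
When $A\in q$, the complement of this set is $\delta S\cap\overline{\phi(s)\setminus s^{-1}A}$. This is closed, because $\delta S$ is an intersection of clopen sets. Hence the preimage is open.

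When $A\notin q$, applying the first case to $S\setminus A\in q$ only shows that the preimage is closed. Openness of $\delta S\cap\overline{s^{-1}A}$ in $\beta S$ is therefore the main obstacle. It is also where the choice of the fixed point $q\in\beta S\setminus\delta S$ must be exploited. My first attempt will be to show that every $p\in\delta S\cap\overline{s^{-1}A}$ has a basic neighbourhood $\bar B$ with $\bar B\setminus\delta S$ mapped into $\bar A$. This amounts to checking that such a $p$ cannot be a limit of points of $\beta S\setminus\delta S$ when $A\notin q$.

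If this fails in general, the fallback is to take the basic neighbourhood of $\lambda_s(p)$ small enough, using that $\lambda_s(p)\in\delta S$ is separated from $q$. I would then verify continuity pointwise at $p\in\delta S$ and at $p\notin\delta S$ separately. At points $p\notin\delta S$ continuity needs an open neighbourhood of $p$ missing $\delta S$. Such a neighbourhood is available: some $\phi(x)\notin p$, and then $\overline{S\setminus\phi(x)}$ works.
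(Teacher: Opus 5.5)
The paper states this theorem without proof. Your semigroup case, and continuity at points of $\beta S\setminus\delta S$, are fine. The non-semigroup case, however, cannot be completed, because there the statement is false: both the step you assert and the step you flag break down.

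\textbf{The composition law.} The claim that $t\cdot p\in\delta S$ for $t\in S$, $p\in\delta S$ is wrong. Your intersection argument only covers those $x$ for which $x\cdot t$ is defined. Suppose instead that $z\cdot t$ is undefined. Then for every $y\in\phi(t)$ the product $z\cdot(t\cdot y)$ is undefined, since otherwise $(z\cdot t)\cdot y$ would be defined. Hence $t^{-1}\phi(z)=\emptyset$ and $\phi(z)\notin t\cdot p$. What does hold is $\delta S\cdot\delta S\subseteq\delta S$. Left multiplication by a point $t\in S$ preserves $\delta S$ only if $t\in\bigcap_{z\in S}\phi(z)$.

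In Example \ref{e 2.2} this intersection is empty, since $F\notin\phi(F)$. So each $\lambda_t$ maps $\beta S$ into $\beta S\setminus\delta S$, and $\lambda_{\{1\}}\circ\lambda_{\{2\}}$ is constantly $q$. On the other hand, $\lambda_{\{1,2\}}(p)=\{1,2\}\cdot p$ is injective on $\delta S$, because $y\mapsto\{1,2\}\cup y$ is injective on $\phi(\{1,2\})$. Moreover $\delta S$ is infinite: it contains every nonprincipal ultrafilter containing $\{\{n\}:n\in\mathbb{N}\}$. So clause (3) of Definition \ref{d 2.5} fails.

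\textbf{Continuity.} The obstacle you isolated is equally fatal. Since $\lambda_s$ sends $\overline{B}\setminus\delta S$ to $q\notin\overline{A}$, your first attempt needs $\overline{B}\subseteq\delta S$ for some $B\in p$, i.e.\ $\bigcap_{z\in S}\phi(z)\in p$. This fails whenever $p$ is a boundary point of $\delta S$. In Example \ref{e 2.2}, $\lambda_s$ equals $q$ on the dense set $S\subseteq\beta S\setminus\delta S$. So a continuous $\lambda_s$ would be constant on $\beta S$, contradicting the injectivity of $p\mapsto s\cdot p$ on $\delta S$.

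Your fallback runs the wrong way. A neighbourhood of $\lambda_s(p)$ omitting $q$ is precisely what witnesses discontinuity at $p$. Also, $\lambda_s(p)\in\delta S$ is false in general.

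\textbf{A version that does hold.} Adjoin an isolated point $\infty$ to $\beta S$. Put $\lambda_s(p)=s\cdot p$ for $p\in\overline{\phi(s)}$, and $\lambda_s(p)=\infty$ otherwise (including $p=\infty$). Then $\lambda_s^{-1}(\overline{A})=\overline{s^{-1}A}$ and $\lambda_s^{-1}(\{\infty\})$ is clopen, so $\lambda_s$ is continuous. For $t\in\phi(s)$, the identities $t^{-1}\phi(s)=\phi(st)\subseteq\phi(t)$ and $t^{-1}(s^{-1}A)=(st)^{-1}A$ yield $\lambda_s\circ\lambda_t=\lambda_{st}$.
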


\section{Minimal systems in an adequate partial semigroup}
In \cite{hindrec}, the authors proved that $U(x)$ is a left ideal of
$\beta S$ for any semigroup $S$. They also constructed a dynamical system for which
\[
K(\beta S)=\bigcap_{x\in X} U(x)
\quad \text{and}
\quad
clK(\beta S)=\overline{K(\beta S)}
=
\bigcap \{\, U(x) : x\in X \text{ and } U(x)\text{ is closed} \,\}.
\]
Moreover, under certain weak cancellation assumptions, they showed that
$U(x)$ properly contains $\overline{K(\beta S)}$.

In \cite{ms}, the above results were extended to dense subsemigroups of semitopological semigroups.
Motivated by these works, we establish analogous results for commutative adequate partial semigroups in this final section.

To proceed, we require several definitions and results concerning limits along ultrafilters and topological dynamics for adequate partial semigroups, which are natural analogues of the corresponding notions for arbitrary semigroups (see Chapter~4 of \cite{hindalg}). We begin with the following definition.

\begin{definition}\cite[Definition 2.8.]{pdeb25}\label{uniform and proximality}
		Let $(S,\cdot)$ be an adequate partial semigroup and let $\left(X,\langle T_{s}\rangle_{s\in S}\right)$ be a dynamical system.

		\begin{enumerate}
			\item A point $y\in S$ is {\it uniformly recurrent} if and only if for every neighborhood $U$ of $y$, $\left\{s\in S:T_{s}\left(y\right)\in U\right\} $ is syndetic.
			\item The points $x$ and $y$ of $X$ are {\it proximal} if and only if for every neighborhood $U$ of the diagonal in $X\times X$ and for each $H\in \mathcal{P}_{f}\left(S\right)$ there is some $s\in \sigma\left(H\right)$ such that $\left(T_{s}\left(x\right),T_{s}\left(y\right)\right)\in U$.
		\end{enumerate}
	\end{definition}

 Proximal points are characterized in terms of ultrafilters in the following theorem.

	\begin{lemma}\label{l 5.1}
		Let $S$ be an adequate partial semigroup. Let $\left(X,\langle T_{s}\rangle_{s\in S}\right)$ be a dynamical system and let $x,y\in X$. Then $x$ and $y$  are proximal if and only if there is some $p\in \delta S$ such that $T_{p}\left(x\right)=T_{p}\left(y\right)$.
	\end{lemma}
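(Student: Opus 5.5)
We interpret $T_p$ for $p\in\beta S$ as the limit map $T_p(x)=p\text{-}\lim_{s\in S}T_s(x)$, the analogue of Chapter~4 of \cite{hindalg}. This limit exists because $X$ is compact Hausdorff, and it is unique. The proof splits into the two directions of the equivalence. In each we work with the uniformity of $X$, whose entourages are the neighborhoods of the diagonal $\Delta\subseteq X\times X$.

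($\Leftarrow$) Suppose $p\in\delta S$ and $T_p(x)=T_p(y)=z$. Fix a neighborhood $U$ of $\Delta$ and $H\in\mathcal{P}_f(S)$.
\begin{enumerate}
\item Since $X$ is compact Hausdorff, hence uniform, choose an open neighborhood $W$ of $z$ with $W\times W\subseteq U$.
\item By the definition of the $p$-limit, $A=\{s\in S:T_s(x)\in W\}\in p$ and $B=\{s\in S:T_s(y)\in W\}\in p$.
\item Since $p\in\delta S\subseteq cl_{\beta S}\sigma(H)$, we also have $\sigma(H)\in p$.
\end{enumerate}
Hence $A\cap B\cap\sigma(H)\in p$, so this set is nonempty. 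Any $s$ in it lies in $\sigma(H)$ and satisfies $(T_s(x),T_s(y))\in W\times W\subseteq U$, which is proximality.

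($\Rightarrow$) Suppose $x$ and $y$ are proximal. Let $\mathcal{U}$ be the family of neighborhoods of $\Delta$. For $U\in\mathcal{U}$ and $H\in\mathcal{P}_f(S)$ put
\[
E(U,H)=\{s\in\sigma(H):(T_s(x),T_s(y))\in U\}.
\]
\begin{enumerate}
\item Each $E(U,H)$ is nonempty by proximality.
\item The family has the finite intersection property, since
\[
E(U_1,H_1)\cap E(U_2,H_2)\supseteq E(U_1\cap U_2,H_1\cup H_2),
\]
using $\sigma(H_1\cup H_2)=\sigma(H_1)\cap\sigma(H_2)$.
\item Therefore there is an ultrafilter $p\in\beta S$ containing every $E(U,H)$.
\item Because each $\sigma(H)\in p$, we get $p\in\delta S$.
\end{enumerate}

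It remains to show $T_p(x)=T_p(y)$. Let $a=T_p(x)$, $b=T_p(y)$, and suppose $a\neq b$.
\begin{enumerate}
\item Take disjoint neighborhoods $V_a$ of $a$ and $V_b$ of $b$.
\item Let $U=(V_a\times V_b)^c\cup\bigl((X\setminus \overline{V_a'})\times X\bigr)$, where $V_a'$ is a smaller closed neighborhood of $a$. The simpler choice is $U=X\times X\setminus(\overline{V_a'}\times\overline{V_b'})$, with $V_a'$, $V_b'$ neighborhoods whose closures are disjoint. This $U$ is open and contains $\Delta$, since $\overline{V_a'}\cap\overline{V_b'}=\emptyset$.
\item The sets $\{s:T_s(x)\in V_a'\}$, $\{s:T_s(y)\in V_b'\}$ and $E(U,\{h\})$ (for any $h\in S$) all belong to $p$.
\item Their intersection is nonempty. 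Any $s$ in it has $(T_s(x),T_s(y))\in V_a'\times V_b'$ and also in $U$, a contradiction.
\end{enumerate}
Hence $a=b$.

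The main point of care is this last step. One must choose the neighborhood of the diagonal so that it excludes a product of neighborhoods of $a$ and $b$. Normality, or equivalently the regularity of the compact Hausdorff space $X$, supplies $V_a'$ and $V_b'$ with disjoint closures. Note also that this argument nowhere uses property (3) of Definition~\ref{d 2.5}, so only continuity and compactness matter.
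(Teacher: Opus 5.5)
Your proof is correct. The paper gives no argument of its own; it only cites \cite[Lemma 2.12]{pdeb25}. What you write is the natural adaptation of the semigroup proof: you add the sets $\sigma(H)$ to the filter base so that the ultrafilter lies in $\delta S$, and you refute $T_p(x)\neq T_p(y)$ using the open neighborhood $X\times X\setminus(\overline{V_a'}\times\overline{V_b'})$ of the diagonal.

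Two cosmetic points. First, delete the abandoned first formula for $U$ in that last step. Second, continuity of the maps $T_s$ is not actually used either. Only the fact that $X$ is compact Hausdorff matters: $p$-limits then exist and are unique, and distinct points can be separated by neighborhoods with disjoint closures.
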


	\begin{proof}
	 See \cite[Lemma 2.12]{pdeb25}.
	\end{proof}
\begin{lemma}\label{l 5.2}
		Let $\left(S, \cdot\right)$ be an adequate partial semigroup.If $q\in \delta S$ and $p\in \beta S$, then $$\left(pq\right)=p\text{-}\lim_{s\in S}q\text{-}\lim_{t\in \phi\left(s\right)}{st}.$$

	\end{lemma}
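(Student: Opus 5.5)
The plan is to verify membership of an arbitrary set $A\subseteq S$ on both sides, using Lemma \ref{l 2.1} and the definition of ultrafilter limits in $\beta S$. First we have to make sure the iterated limit makes sense. Fix $s\in S$. Because $q\in\delta S\subseteq \overline{\phi(s)}$, we have $\phi(s)\in q$. Hence the map $t\mapsto st$, defined on $\phi(s)$ with values in $S\subseteq\beta S$, has a well-defined limit along $q$ in the compact Hausdorff space $\beta S$. Set
\[
x_s=q\text{-}\lim_{t\in\phi(s)} st .
\]
This gives a function $s\mapsto x_s$ from $S$ to $\beta S$, so $p\text{-}\lim_{s\in S}x_s$ also exists. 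It therefore suffices to identify each $x_s$ and then the outer limit.

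\textbf{Inner limit.} We show $x_s=s\cdot q$. For $A\subseteq S$, limits along ultrafilters of maps into $\beta S$ are characterised by
\[
A\in q\text{-}\lim_{t\in\phi(s)} st \iff \{t\in\phi(s): A\in st\}\in q .
\]
Since $st\in S$ is a principal ultrafilter, the condition $A\in st$ just means $st\in A$. So the set on the right is $s^{-1}A=\{t\in\phi(s): st\in A\}$. Applying Lemma \ref{l 2.1}(1) with $x=s$ and $q\in\overline{\phi(s)}$, we get $s^{-1}A\in q\iff A\in s\cdot q$. Therefore $x_s=s\cdot q$.

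\textbf{Outer limit.} By the same characterisation,
\[
A\in p\text{-}\lim_{s\in S}s\cdot q \iff \{s\in S: A\in s\cdot q\}\in p .
\]
By the inner step, $A\in s\cdot q$ is equivalent to $s^{-1}A\in q$. So the condition becomes $\{s\in S: s^{-1}A\in q\}\in p$. Lemma \ref{l 2.1}(2), which applies since $p\in\beta S$ and $q\in\delta S$, says this holds exactly when $A\in pq$. Both ultrafilters therefore contain the same sets, and they are equal.

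\textbf{Main obstacle.} The only delicate point is bookkeeping: the inner limit must be taken over $\phi(s)$ rather than over all of $S$, and this is legitimate only because $\phi(s)\in q$. That is precisely where the hypothesis $q\in\delta S$ is used; no continuity of the multiplication is needed.
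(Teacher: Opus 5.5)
Your proof is correct and complete. The inner $q$-limit is defined for every $s$ because $\delta S\subseteq\overline{\phi(s)}$, it equals $s\cdot q$ by Lemma~\ref{l 2.1}(1), and the outer $p$-limit is then identified with $pq$ by Lemma~\ref{l 2.1}(2), using the clopen-basis characterisation of ultrafilter limits in $\beta S$. The paper gives no argument of its own, only a pointer to \cite[Lemma 2.15]{pdeb25}. Your membership check is the standard self-contained verification; in the framework of \cite{HM}, where $s\cdot q$ and $p\cdot q$ are introduced through these two limits, it amounts to running the derivation of Lemma~\ref{l 2.1} in reverse.
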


\begin{proof}
	 See \cite[Lemma 2.15]{pdeb25}.
	\end{proof}
The following theorem is an analogous version of \cite[Theorem 4.5]{hindalg} for adequate partial semigroups.
\begin{theorem}\label{t 5.1}
	Let $\left(S,\cdot\right)$ be an adequate partial semigroup, let $X$ be a compact Housdroff space and $q\in \delta S$ and $p\in \beta S$. Let $\langle x_{n}\rangle_{s\in S}$  be an index family in $X$, and $p,q\in \beta S$. Then $$\left(pq\right)\text{-}\lim_{v\in S}x_{v}=p\text{-}\lim_{s\in S}q\text{-}\lim_{t\in \phi\left(s\right)}x_{st}.$$
\end{theorem}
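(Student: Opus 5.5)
We prove the identity $\left(pq\right)\text{-}\lim_{v\in S}x_{v}=p\text{-}\lim_{s\in S}q\text{-}\lim_{t\in \phi(s)}x_{st}$ directly from the definition of limits along ultrafilters. We read the statement with $p\in\beta S$ and $q\in\delta S$, so that $pq$ is defined and Lemma \ref{l 2.1}(2) applies. Recall that for an ultrafilter $r$ on a set $D$ and a family $\langle y_d\rangle_{d\in D}$ in a compact Hausdorff space, $r\text{-}\lim_{d\in D} y_d=z$ means that $\{d\in D: y_d\in U\}\in r$ for every neighbourhood $U$ of $z$. Such a limit exists and is unique by compactness and the Hausdorff property.

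First we check that the inner limits make sense. Fix $s\in S$. Since $q\in\delta S\subseteq \overline{\phi(s)}$, we have $\phi(s)\in q$. Hence $q$ restricts to an ultrafilter on $\phi(s)$, and $y_s:=q\text{-}\lim_{t\in\phi(s)}x_{st}$ exists. Then $z:=p\text{-}\lim_{s\in S}y_s$ exists. Let $w:=(pq)\text{-}\lim_{v\in S}x_v$. We aim to show $w=z$.

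Suppose $w\neq z$. Since $X$ is Hausdorff, we can choose disjoint open neighbourhoods $U$ of $z$ and $V$ of $w$, and use regularity (available since $X$ is compact Hausdorff) to shrink $U$ so that $\overline{U}\cap \overline{V}=\emptyset$. Let $A=\{v\in S: x_v\in V\}$. Then $A\in pq$, so by Lemma \ref{l 2.1}(2) the set $B=\{s\in S: s^{-1}A\in q\}$ belongs to $p$. Recall $s^{-1}A=\{t\in\phi(s): x_{st}\in V\}$. For $s\in B$, the set $s^{-1}A$ is in $q$, and hence $y_s\in\overline{V}$, since otherwise $X\setminus\overline V$ would be a neighbourhood of $y_s$ whose preimage set in $q$ misses $s^{-1}A$. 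On the other hand, $\{s: y_s\in U\}\in p$. Intersecting this with $B$ gives some $s$ with $y_s\in U\cap\overline V$, which is empty. This contradiction shows $w=z$.

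The main point needing care is the bookkeeping in the partial setting. The condition $\phi(s)\in q$ for every $s$ (which is exactly where $q\in\delta S$ is used) guarantees that each inner limit is taken over a set belonging to $q$. The definition $x^{-1}A\subseteq\phi(x)$ matches the indexing $t\in\phi(s)$, so Lemma \ref{l 2.1}(2) translates membership in $pq$ exactly into the iterated condition. Alternatively, one could deduce the statement from Lemma \ref{l 5.2} by applying the continuous extension $\tilde{x}:\beta S\to X$ of $v\mapsto x_v$, which commutes with ultrafilter limits. That route is shorter if one accepts Lemma \ref{l 5.2}.
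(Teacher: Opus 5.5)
Your proof is correct, but its main argument takes a different route from the paper's. The paper's proof does not use neighbourhoods at all. It lets $\tilde f:\beta S\to X$ be the continuous extension of $v\mapsto x_v$ and writes $(pq)\text{-}\lim_{v\in S}x_v=\tilde f(pq)$. It then substitutes $pq=p\text{-}\lim_{s\in S}q\text{-}\lim_{t\in\phi(s)}st$ from Lemma~\ref{l 5.2} and pulls $\tilde f$ through both ultrafilter limits by continuity. That is exactly the alternative you sketch in your last paragraph.

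Your main argument instead works directly from the membership criterion of Lemma~\ref{l 2.1}(2), combined with a separation argument, and needs nothing beyond that lemma. In fact Lemma~\ref{l 5.2} is the special case $X=\beta S$, $x_v=v$ of what you prove. So your route is self-contained, whereas the paper's two-line proof defers the real work to the externally cited Lemma~\ref{l 5.2}.

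Both routes genuinely need $q\in\delta S$. You need it to get $\phi(s)\in q$ and to apply Lemma~\ref{l 2.1}(2); the paper needs it to invoke Lemma~\ref{l 5.2}. So your reading of the statement's inconsistent hypotheses is the right one.

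One minor point: the regularity step is superfluous. Disjoint open sets $U\ni z$ and $V\ni w$ already satisfy $U\cap\overline V=\emptyset$, and that is all you use.
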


\begin{proof}
  Define $f:S\to X$ by $f(s)=x_s$, and let $\widetilde{f}:\beta S\to X$ be the continuous extension. Then:
\[
\begin{aligned}
(pq)\text{-}\lim_{v\in S} x_v
    &= (pq)\text{-}\lim_{v\in S} \widetilde{f}(v)\\[4pt]
    &= \widetilde{f}\!\left( (pq)\text{-}\lim_{v\in S} v \right)\\[4pt]
    &= \widetilde{f}(pq)\\[4pt]
    &\stackrel{\text{Lemma \ref{l 5.2}}}{=}
       \widetilde{f}\!\left( p\text{-}\lim_{s\in S}\ q\text{-}\lim_{t\in\phi(s)} st \right)\\[4pt]
    &= p\text{-}\lim_{s\in S}\ q\text{-}\lim_{t\in\phi(s)} \widetilde{f}(st)\\[4pt]
    &= p\text{-}\lim_{s\in S}\ q\text{-}\lim_{t\in\phi(s)} x_{st}.
\end{aligned}
\]

\end{proof}
\begin{corollary}\label{action of delta s}
Let $S$ be an adequate partial semigroup. Let $\left(X,\langle T_{s}\rangle_{s\in S}\right)$ be a dynamical system. Let $p\in \beta S$ and $q\in \delta S$, then for any $x\in X$, $T_{pq}\left(x\right)=T_{p}\left(T_{q}\left(x\right)\right)$.
\end{corollary}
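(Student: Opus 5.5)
We take as definition $T_p(x)=p\text{-}\lim_{s\in S}T_s(x)$ for $p\in\beta S$; this is well defined because $X$ is compact Hausdorff. The plan is to apply Theorem \ref{t 5.1} to the indexed family $x_v=T_v(x)$, $v\in S$. That theorem gives
\[
T_{pq}(x)=(pq)\text{-}\lim_{v\in S}T_v(x)=p\text{-}\lim_{s\in S}\,q\text{-}\lim_{t\in\phi(s)}T_{st}(x).
\]
Here $q\in\delta S$ guarantees $\phi(s)\in q$ for every $s$, so the inner limit along $\phi(s)$ makes sense.

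Next, for each fixed $s\in S$ and each $t\in\phi(s)$, condition (3) of Definition \ref{d 2.5} gives $T_{st}(x)=T_s(T_t(x))$. The inner limit therefore becomes $q\text{-}\lim_{t\in\phi(s)}T_s(T_t(x))$. Since $T_s$ is continuous, we can pull it out:
\[
q\text{-}\lim_{t\in\phi(s)}T_s(T_t(x))=T_s\Bigl(q\text{-}\lim_{t\in\phi(s)}T_t(x)\Bigr).
\]
Because $\phi(s)\in q$, the limit along $q$ restricted to $\phi(s)$ coincides with $q\text{-}\lim_{t\in S}T_t(x)=T_q(x)$. This is the routine fact that limits along an ultrafilter depend only on the values on a member of it. Hence the inner limit equals $T_s(T_q(x))$.

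Finally,
\[
T_{pq}(x)=p\text{-}\lim_{s\in S}T_s(T_q(x))=T_p(T_q(x)),
\]
where the last equality is the definition of $T_p$ applied to the point $T_q(x)$.

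The main point requiring care is the step where the restricted limit over $\phi(s)$ is identified with $T_q(x)$. It uses only that $q\in\delta S\subseteq\overline{\phi(s)}$ for every $s$, together with the uniqueness of $q$-limits in a Hausdorff space. No continuity of $T_p$ for $p\notin S$ is needed, because only the continuity of each individual $T_s$ was used.
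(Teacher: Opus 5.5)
Your proposal is correct and follows the paper's proof essentially step for step: apply Theorem~\ref{t 5.1} to $x_v=T_v(x)$, replace $T_{st}$ by $T_s\circ T_t$ for $t\in\phi(s)$, pull $T_s$ out of the inner limit by continuity, then read off the inner limit as $T_q(x)$ and the outer one as $T_p(T_q(x))$. You also justify explicitly (via $\phi(s)\in q$) that the limit along $q$ restricted to $\phi(s)$ equals $T_q(x)$, a step the paper uses without comment.
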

\begin{proof}
Let $f(s)=T_{s}(x)$. Then
\[
\begin{aligned}
T_{pq}(x)
&=(pq)\text{-}\lim_{v\in S} T_v(x)\\
&=p\text{-}\lim_{s\in S} q\text{-}\lim_{t\in\phi(s)} T_{st}(x)\\
&=p\text{-}\lim_{s\in S} q\text{-}\lim_{t\in\phi(s)} T_s\bigl(T_t(x)\bigr)\\
&=p\text{-}\lim_{s\in S} T_s\!\left(q\text{-}\lim_{t\in\phi(s)} T_t(x)\right)\\
&=p\text{-}\lim_{s\in S} T_s\!\left(T_q(x)\right)\\
&=T_p\!\left(T_q(x)\right).
\end{aligned}
\]
\end{proof}

The following lemma establishes a fundamental connection between uniformly recurrent points and minimal left ideals.

	\begin{lemma}\label{l 5.3}
		Let $S$ be an adequate partial semigroup. Let $\left(X,\langle T_{s}\rangle_{s\in S}\right)$ be a dynamical system and $L$ be a minimal left ideal of $\delta S$ and $x\in X$. The following statements are equivalent:
		\begin{enumerate}
			\item The point $x$ is a uniformly recurrent point of $\left(X,\langle T_{s}\rangle_{s\in S}\right)$.
			\item There exists $p\in L$ such that $T_{p}\left(x\right)=x$.
			\item There exists an idempotent $p\in L$ such that $T_{p}\left(x\right)=x$.
		\end{enumerate}
	\end{lemma}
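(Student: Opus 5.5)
We prove the cycle $(3)\Rightarrow(2)\Rightarrow(1)\Rightarrow(3)$, modelling it on \cite[Theorem 19.23]{hindalg} and using the adequate-partial-semigroup tools above. Here $T_p(x)=p\text{-}\lim_{s\in S}T_s(x)$ for $p\in\beta S$, and Corollary \ref{action of delta s} gives $T_{pq}=T_p\circ T_q$ whenever $q\in\delta S$. The implication $(3)\Rightarrow(2)$ is trivial.

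\textbf{$(2)\Rightarrow(1)$.} Let $p\in L$ with $T_p(x)=x$, and let $U$ be a neighbourhood of $x$. Choose a closed neighbourhood $V\subseteq U$ of $x$, and put $B=\{s\in S: T_s(x)\in V\}$. For every $q\in L$ we have $qp\in L$ and $T_{qp}(x)=T_q(T_p(x))=T_q(x)$.
\begin{enumerate}
\item Since $L=\delta S\, p'$ for any $p'\in L$, and $K(\delta S)$ is minimal, $L$ contains an idempotent $e$. Since $L=\delta S\,e$ and $p\in L$, we get $p=pe$.
\item Then $x=T_p(x)=T_{pe}(x)=T_p(T_e(x))$. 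We want to derive $T_e(x)=x$ directly. I would instead aim to show that $B\in q$ for all $q$ in the minimal left ideal $L$ up to translation.
\item The standard fact needed is: $B$ is syndetic iff $B$ meets every minimal left ideal in a suitable sense. For $\delta S$ the analogue reads: if $\overline{B}\cap L'\neq\emptyset$ for every minimal left ideal $L'$, then $B$ is syndetic. I would use the cleaner route: $\overline{B}\supseteq\{q\in\beta S:T_q(x)\in\operatorname{int}V\}$.
\end{enumerate}
Take any $r\in\delta S$. Then $rp\in L$ and $L=\delta S\,rp$, so there is $u\in\delta S$ with $urp=p$. Hence $T_u(T_r(x))=T_{urp}(x)=T_p(x)=x$, so $\{t: T_t(T_r(x))\in \operatorname{int}V\}\in u$ and is nonempty. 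Therefore the orbit closure of every point $T_r(x)$, and likewise of every $T_s(x)$, reaches $\operatorname{int}V$. A compactness argument over the closed set $Y=\{T_r(x): r\in\delta S\}$ then yields finitely many $t_1,\dots,t_n$ with $Y\subseteq\bigcup_i T_{t_i}^{-1}[\operatorname{int}V]$. This open cover extends to the points $T_s(x)$ for $s$ in a member of every element of $\delta S$, i.e.\ to $s\in\sigma(H)$ for some $H\in\mathcal{P}_f(S)$, by compactness of $\delta S=\bigcap_H\overline{\sigma(H)}$. We then have to absorb the remaining elements via $G$; syndeticity in the sense of Definition \ref{1.3} must be interpreted so that this absorption is possible. \textbf{I expect this step to be the main obstacle:} reconciling the paper's notion of syndetic with $\delta S$, where the relevant notion should be $\sigma(H)\subseteq\bigcup_{t\in G}t^{-1}B$.

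\textbf{$(1)\Rightarrow(3)$.} Let $x$ be uniformly recurrent. For each neighbourhood $U$ of $x$, the set $B_U=\{s:T_s(x)\in U\}$ is syndetic. I claim $\overline{B_U}\cap L\neq\emptyset$: pick $q\in L$; syndeticity gives $G$ with $\bigcup_{t\in G}t^{-1}B_U\in q$, hence some $t$ with $t^{-1}B_U\in q$, i.e.\ $B_U\in tq$ by Lemma \ref{l 2.1}, and $tq\in L$ because $L$ is a left ideal of $\delta S$ containing $\delta S$-translates. (One checks $tq\in\delta S$ using $q\in\delta S$; this is where adequacy is used.) The sets $\overline{B_U}\cap L$ are closed and have the finite intersection property, so there is $p\in L$ with $T_p(x)=x$. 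Then $M=\{q\in L: T_q(x)=x\}$ is nonempty and closed. It is a subsemigroup, by Corollary \ref{action of delta s}. By Ellis's theorem $M$ contains an idempotent, which gives (3).
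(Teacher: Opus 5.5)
\textbf{There is a genuine gap; the decisive error is in your $(1)\Rightarrow(3)$.} You claim that $tq\in L$, and even $tq\in\delta S$, for $t\in S$ and $q\in\delta S$. This is false, and adequacy does not rescue it. If $a\cdot(t\cdot v)$ is defined, then so is $a\cdot t$. Hence $t^{-1}\phi(a)=\emptyset$ whenever $t\notin\phi(a)$. So $\phi(a)\in tq$ forces $t\in\phi(a)$, and $tq\in\delta S$ forces $t\in\bigcap_{a\in S}\phi(a)$, which is usually empty. In Example~\ref{e 2.2}, for instance, $\phi(t)\notin tq$ for every $t$. The point is that $\delta S$ is a subsemigroup of $\beta S$, not a left ideal, and principal ultrafilters generally do not lie in $\delta S$.

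So from ``some $t^{-1}B_U\in q$'' you only get $B_U\in tq$ with $tq$ outside $L$. To land in $L$ you need some $w\in\delta S$ containing $D=\{t\in S: t^{-1}B_U\in q\}$. That requires $D$ to meet every $\sigma(G)$, and then $B_U\in wq\in\delta S\,q\subseteq L$. This is exactly why syndeticity here must be the $\delta S$-notion: $\overline{A}$ meets every left ideal of $\delta S$ (Definition~\ref{syndetic in partial}(2), Theorem~\ref{t 4.1}). With that notion, $\overline{B_U}\cap L\neq\emptyset$ is immediate, and the rest of your $(1)\Rightarrow(3)$ is fine (finite intersection property, then Ellis on the closed subsemigroup $\{q\in L:T_q(x)=x\}$). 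The condition $\sigma(H)\subseteq\bigcup_{t\in G}t^{-1}B$ that you single out as ``the relevant notion'' is the strictly weaker $\check{c}$-syndeticity, and it is precisely the one for which this step breaks.

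\textbf{Your $(2)\Rightarrow(1)$ is unfinished, and aimed at that weaker target.} The intended argument is the adaptation of \cite[Theorem 19.23]{hindalg} behind the cited \cite[Lemma 2.18]{pdeb25}, which this paper reproduces in its proof of Lemma~\ref{l 5.4}, (4)$\Rightarrow$(3). It is short. Take $U$ open, put $B=\{s:T_s(x)\in U\}$, and suppose $\overline{B}\cap L'=\emptyset$ for some left ideal $L'$ of $\delta S$. Pick $r\in L'$. Then $\delta S\,rp$ is a left ideal of $\delta S$ contained in $L$, so it equals $L$. This gives $q\in\delta S\,r\subseteq L'$ with $qp=p$. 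Then $T_q(x)=T_{qp}(x)=T_p(x)=x\in U$, so $B\in q$, a contradiction.

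Your compactness route can also be completed, but only with an observation you did not make. The $u$ with $urp=p$ lies in $\delta S$, so $\sigma(G)\in u$ for every $G$. Hence the witness $t$ with $T_t(T_r(x))\in U$ can be taken in any prescribed $\sigma(G)$. Compactness of $Y$ then gives $t_1,\dots,t_n\in\sigma(G)$. Continuity of $r\mapsto T_r(x)$ on $\beta S$, together with $\delta S=\bigcap_K\overline{\sigma(K)}$, gives $K\supseteq\{t_1,\dots,t_n\}$ with $T_s(x)\in\bigcup_i T_{t_i}^{-1}[U]$ for all $s\in\sigma(K)$. Since such $s$ lie in $\phi(t_i)$, you have $T_{t_i}(T_s(x))=T_{t_is}(x)$, so $\sigma(K)\subseteq\bigcup_i t_i^{-1}B$. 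It is this statement \emph{for every} $G$, not a single $G$ absorbing everything, that makes $\overline{B}$ meet every left ideal of $\delta S$.
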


	\begin{proof}
	See \cite[Lemma 2.18]{pdeb25}.
	\end{proof}

\begin{lemma}\label{l 5.4} Let $S$ be an adequate partial semigroup. Let $\left(X,\langle T_{s}\rangle_{s\in S}\right)$ be a dynamical system and $L$ be a minimal left ideal of $\delta S$ and $x\in X$. Then the following are equivalent:
 \begin{enumerate}
  \item  $x$ is uniformly recurrent.
  \item  There exists $q\in L$ such that $T_q(x)=x$.
 \item   There exists an idempotent $q\in L$ such that $T_q(x)=x$.
 \item   There exists $y\in X$  and an idempotent $q\in L$ such that $T_q(y)=x$.
\item There exists $q \in K(\delta S)$ such that $T_q(x)=x$.
\item There exists $y \in X$ and $q \in K(\delta S)$ such that $T_q(y)=x$.

\end{enumerate}
\end{lemma}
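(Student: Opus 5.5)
The equivalence of (1), (2) and (3) is exactly Lemma \ref{l 5.3}, so the plan is to attach (4), (5) and (6) to this cycle. I would prove the implications (3)$\Rightarrow$(4), (3)$\Rightarrow$(5), (5)$\Rightarrow$(6), (4)$\Rightarrow$(6) and (6)$\Rightarrow$(2). Throughout I use the following facts, available by Theorem \ref{t 2.1}:
\begin{enumerate}
\item $\delta S$ is a compact Hausdorff right topological semigroup.
\item $L\subseteq K(\delta S)$.
\item Every element of $K(\delta S)$ lies in some minimal left ideal.
\item For any minimal left ideal $L'$ and any $q\in L'$, we have $L'=\delta S\, q$.
\end{enumerate}

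The forward implications are immediate. For (3)$\Rightarrow$(4), take $y=x$. For (3)$\Rightarrow$(5), observe that an idempotent $q\in L$ lies in $K(\delta S)$ because $L\subseteq K(\delta S)$. For (5)$\Rightarrow$(6), again take $y=x$. For (4)$\Rightarrow$(6), use $L\subseteq K(\delta S)$ once more.

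The substantive step is (6)$\Rightarrow$(2). Suppose $q\in K(\delta S)$ and $T_q(y)=x$. I need an element of the prescribed minimal left ideal $L$ that fixes $x$.

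First I handle the fact that $q$ may lie in a different minimal left ideal $L'$. Let $R$ be the minimal right ideal of $\delta S$ containing $q$. Then $R\cap L$ is a group, and I let $e$ be its identity. It is standard that $e$ is then a left identity for $R$, so $eq=q$.

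Next, Corollary \ref{action of delta s} applies, since $e,q\in\delta S$. It gives
\[
T_e(x)=T_e(T_q(y))=T_{eq}(y)=T_q(y)=x.
\]
Since $e\in L$, this is (2), and in fact (3), because $e$ is an idempotent.

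The only potential obstacle is making sure the composition rule $T_{pq}=T_p\circ T_q$ is used solely with $q\in\delta S$, as Corollary \ref{action of delta s} requires. In the computation above the inner element is $q\in K(\delta S)\subseteq\delta S$, so the corollary applies. The other ingredient is the algebraic fact $eq=q$ for the identity $e$ of $R\cap L$ with $q\in R$. I would justify it from the standard structure theory of compact right topological semigroups, as in \cite{hindalg}: $R=e\,\delta S$, so $q=es$ for some $s\in\delta S$, and hence $eq=e(es)=es=q$.

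This closes the cycle (1)$\Leftrightarrow$(2)$\Leftrightarrow$(3)$\Rightarrow$(4)$\Rightarrow$(6)$\Rightarrow$(2), together with (3)$\Rightarrow$(5)$\Rightarrow$(6), and so proves all six statements equivalent.
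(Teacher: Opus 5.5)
Your proposal is correct and follows the paper's route: the key step, (6)$\Rightarrow$(2)/(3), takes the identity $e$ of the group $R\cap L$, uses $eq=q$, and gets $T_e(x)=T_{eq}(y)=x$, exactly as the paper does. Your choice of $R$ as the minimal right ideal containing $q$ is the correct reading of the paper's ``$L\cap\delta S\,r$''. The only difference is that the paper proves (4)$\Rightarrow$(3) directly via $T_q(x)=T_{qq}(y)=x$, whereas you route (4) through (6), which is equally valid.
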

\begin{proof}

By Lemma~\ref{l 5.3}, statements (1)--(3) are equivalent.

The implication (3)$\Rightarrow$(4) is immediate.

 We now prove that (4) implies (3).
First observe that, since $q$ is idempotent and $T_q(y)=x$, we have
\[
T_q(x)
= T_q(T_q(y))
= T_{qq}(y)
= T_q(y)
= x.
\]

Let $U$ be a neighborhood of $x$ and define
\[
B=\{\, s\in S : T_s(x)\in U \,\}.
\]
We claim that $B$ is syndetic. Suppose, toward a contradiction, that $B$ is not syndetic. Then there exists $G\in \mathcal{P}_f(S)$ such that the family
\[
\mathcal{E}
=
\Big\{
\sigma(H)\setminus \bigcup_{t\in H} t^{-1}B
:
H\in \mathcal{P}_f(\sigma(G))
\Big\}
\;\cup\;
\Big\{
\sigma(K)
:
K\in \mathcal{P}_f(S)
\Big\}
\]
has the finite intersection property.

Choose $r\in \beta S$ such that $\mathcal{E}\subseteq r$.
Since $\{\sigma(K):K\in\mathcal{P}_f(S)\}\subseteq r$, it follows that $r\in \delta S$.

We claim that
\[
\delta S\, r \cap \overline{B} = \varnothing.
\]
Indeed, if not, there would exist $q\in \delta S$ such that $B\in qr$.
Then $t^{-1}B\in r$ for some $t\in \sigma(G)$, contradicting the choice of $\mathcal{E}\subseteq r$.
Thus $\delta S\, r \cap \overline{B}=\varnothing$.

Now $\delta S\, r p$ is a left ideal of $\delta S$ contained in $L$.
Since $L$ is a minimal left ideal of $\delta S$, it follows that
\[
\delta S\, r p = L.
\]
Hence we may choose $q\in \delta S\, r$ such that $qp=p$.

Then
\[
T_q(x)
= T_q(T_p(x))
= T_{qp}(x)
= T_p(x)
= x.
\]
In particular, $B \in q$. However, $q \in \delta S\, r$, which contradicts the fact that
\[
\delta S\, r \cap \overline{B} = \varnothing.
\]
This contradiction shows that $B$ is syndetic, completing the proof of (4) $\Rightarrow$ (3).

Hence, the statements (1) through (4) are equivalent.
Since (3) implies (5) and (5) implies (6), it remains to show that (6) implies (3) to establish the equivalence of all six statements.

Assume that (6) holds.
Since $L \cap \delta S\, r$ is a group, let $e$ denote the identity element of this group. Then $e q = q$, and therefore
\[
T_e(x) = T_e(T_q(y)) = T_{eq}(y) = T_q(y) = x.
\]

\end{proof}
\begin{definition}\label{d 5.2} Let $S$ be an adequate partial semigroup.
Let $(X,  \langle T_s \rangle_{s \in S})$ be a dynamical system and $x \in X$. Then
\begin{enumerate}
 \item  $U(x)=U_{X}(x)=\{p\in \delta S: T_p(x)  \text{ is uniformly recurrent}\}$.
\item  A subspace $Z$ of $X$ is called invariant  if $T_p(Z)\subseteq Z$ for every $p\in \delta S$.
\end{enumerate}
\end{definition}

\begin{corollary}\label{c 5.1} Let $S$ be an adequate partial semigroup
and $(X,  \langle T_s \rangle_{s \in S})$ be a dynamical system and $x \in X$.
\begin{enumerate}
  \item  If $x$ is uniformly recurrent  then  $\delta S=U(x)$.
 \item  For each $x \in X$, $K(\delta S)\subseteq U(x)$.
\item   For each $x \in X$, $U(x)\cap \delta S$ is a left ideal of  $\delta S$.
 \item  $(\bigcap_{x \in X}U(x))\bigcap \delta S$ is a two-sided ideal of $\delta S$.
 \end{enumerate}
\end{corollary}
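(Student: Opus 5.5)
The plan is to use Lemma~\ref{l 5.4} together with Corollary~\ref{action of delta s} as the only tools, taking the four parts in order.

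\emph{Part (1).} Assume $x$ is uniformly recurrent and let $p\in\delta S$. Choose a minimal left ideal $L$ of $\delta S$. By Lemma~\ref{l 5.4}, (1)$\Rightarrow$(3), there is an idempotent $q\in L$ with $T_q(x)=x$. Since $L$ is a left ideal, $pq\in L\subseteq K(\delta S)$. By Corollary~\ref{action of delta s},
\[
T_{pq}(x)=T_p(T_q(x))=T_p(x).
\]
Thus $T_p(x)=T_{pq}(x)$ with $pq\in K(\delta S)$, and Lemma~\ref{l 5.4}, (6)$\Rightarrow$(1), applied with $y=x$, shows that $T_p(x)$ is uniformly recurrent. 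Hence $\delta S\subseteq U(x)$. The reverse inclusion holds because $U(x)\subseteq\delta S$ by Definition~\ref{d 5.2}.

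\emph{Part (2).} This is immediate from Lemma~\ref{l 5.4}(6). For $q\in K(\delta S)$ the point $T_q(x)$ has the form $T_q(y)$ with $y=x$ and $q\in K(\delta S)$, so it is uniformly recurrent, i.e. $q\in U(x)$.

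\emph{Part (3).} By (2), $U(x)\cap\delta S$ is nonempty. Let $p\in U(x)$ and $r\in\delta S$; we must show $rp\in U(x)$. Note $rp\in\delta S$ since $\delta S$ is a semigroup. The point $z=T_p(x)$ is uniformly recurrent, so by part (1) we have $r\in U(z)$, meaning $T_r(z)$ is uniformly recurrent. By Corollary~\ref{action of delta s}, $T_r(z)=T_r(T_p(x))=T_{rp}(x)$, so $rp\in U(x)$.

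\emph{Part (4).} Set $I=\bigcap_{x\in X}U(x)\cap\delta S$. By (2), $I\supseteq K(\delta S)\neq\emptyset$. Closure under left multiplication follows from (3), since an intersection of left ideals that is nonempty is again a left ideal.

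For right multiplication, let $p\in I$, $r\in\delta S$ and $x\in X$; we must show $T_{pr}(x)$ is uniformly recurrent. By Corollary~\ref{action of delta s},
\[
T_{pr}(x)=T_p(T_r(x)).
\]
Since $p\in U(T_r(x))$, the point $T_p(T_r(x))$ is uniformly recurrent, so $pr\in U(x)$. As $x$ was arbitrary, $pr\in I$.

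The only delicate point is the order of composition in Corollary~\ref{action of delta s}: it requires the right-hand factor to lie in $\delta S$, which holds in each application above ($q\in L$ in (1), $p\in\delta S$ in (3), $r\in\delta S$ in (4)). Everything else reduces directly to Lemma~\ref{l 5.4}.
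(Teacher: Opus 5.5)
Your proposal is correct and follows essentially the same route as the paper: each part reduces to Lemma~\ref{l 5.4}, using an element of $K(\delta S)$ that fixes the point and the identity $T_{pq}(x)=T_p(x)$, together with the composition rule $T_{pq}=T_p\circ T_q$ for $q\in\delta S$; your right-ideal argument in (4) is identical to the paper's. The only differences are cosmetic: you take the fixing element to be an idempotent of a minimal left ideal, and you obtain (3) by applying (1) to the uniformly recurrent point $T_p(x)$, whereas the paper writes $T_{rp}(x)=T_{rqp}(x)$ with $rqp\in K(\delta S)$ directly.
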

\begin{proof}
\begin{enumerate}
\item Suppose $x$ is uniformly recurrent. Then by Lemma \ref{l 5.4}, $T_u(x)=x$ for some
$u\in K(\delta S)$. Thus for every $v\in \delta S$, $T_v(x)=T_v(T_u(x))=T_{vu}(x)$. Now since $uv\in K(\delta S)$,
by Lemma \ref{l 5.4} $T_v(x)$ is uniformly recurrent  and thus, $v\in U(x)$.
Therefore $\delta S= U(x)$.

\item This is immediate from  Lemma \ref{l 5.4}.

\item Let $x\in X$, $p\in U(x)\cap \delta S$ and $r\in \delta S$. By Lemma \ref{l 5.4} pick $q\in K(\delta S)$ such
that $T_q(T_p(x))=T_p(x)$. Then $T_{rp}(x)=T_r(T_q(T_p(x)))=T_{rqp}(x)$. Now $rqp\in  K(\delta S)$.
So by Lemma \ref{l 5.4}, $T_{rp}(x)$ is uniformly recurrent  and hence $rp\in U(x)\cap \delta S$.
Therefore $U(x)\cap \delta S$ is a left ideal of $\delta S$.

\item By (2) $(\bigcap_{x \in X}U(x))\bigcap \delta S$ is nonempty. So by (3) $(\bigcap_{x \in X}U(x))\bigcap \delta S$
is a left ideal of $\delta S$. So it is enough to show that $(\bigcap_{x \in X}U(x))\bigcap \delta S$  is a right ideal
of $\delta S$. To this end, let $p\in (\bigcap_{x \in X}U(x))\bigcap \delta S$ and $q\in \delta S$. Suppose $y\in X$
then $p\in U(T_q(y))$. Thus $T_{pq}(y)$ is uniformly recurrent  and so $pq\in U(y)$.
\end{enumerate}
\end{proof}

We prove the following Lemma and the next Theorem in a manner which has been shown  in \cite{hindrec}.

\begin{lemma}\label{l 5.5} Let $S$ be an adequate partial semigroup.
 Let $(X,  \langle T_s \rangle_{s \in S})$ be a dynamical system and $L$ be a minimal left ideal of $\delta S$.
 \begin{enumerate}
  \item  A subspace $Y$ of $X$ is minimal among all closed and invariant subspaces of $X$ if and only if there
 is some $x\in X$ such that $Y=\{T_p(x): p\in L\}$.

 \item  Let $Y$ be a subspace of $X$ which is minimal among all closed and invariant subspaces of $X$.
 Then every element of $Y$ is uniformly recurrent.

 \item  If $x\in X$ is uniformly recurrent  and $Y=\{T_p(x): p\in \delta S\}$, then $Y$ is minimal among all
 closed and invariant subspaces of $X$.

 \item   If $x\in X$ is uniformly recurrent  then $T_p(x)$ is uniformly recurrent  for every $p\in \delta S$.
 \end{enumerate}
 \end{lemma}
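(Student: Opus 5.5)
The approach mirrors the semigroup argument of \cite{hindrec}. Throughout I use three facts: the continuity of $p\mapsto T_p(x)$ from $\beta S$ to $X$ (it is the continuous extension of $s\mapsto T_s(x)$), Corollary~\ref{action of delta s} ($T_{pq}=T_p\circ T_q$ for $q\in\delta S$), and Lemma~\ref{l 5.4}. The key observation is that for any $x\in X$ the set $Y_x=\{T_p(x):p\in L\}$ is closed and invariant. It is closed because $L=\delta S\,r$ for any $r\in L$, hence $L$ is compact, and $Y_x$ is the continuous image of $L$. It is invariant because $T_q(T_p(x))=T_{qp}(x)$ and $qp\in L$ whenever $q\in\delta S$ and $p\in L$.

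For (1), first the forward direction. Let $Y$ be minimal among closed invariant subspaces, and pick $x\in Y$. By invariance of $Y$ we have $Y_x\subseteq Y$, and since $Y_x$ is closed, invariant and nonempty, minimality gives $Y_x=Y$. For the converse, let $Y=Y_x$ and let $Z\subseteq Y$ be a nonempty closed invariant subspace. Pick $z=T_p(x)\in Z$ with $p\in L$. Since $L$ is minimal, $L=\delta S\,p$. Hence every element of $Y$ has the form $T_{qp}(x)=T_q(z)$ for some $q\in\delta S$, and so lies in $Z$ by invariance. Therefore $Z=Y$, and $Y$ is minimal.

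For (2), write $Y=Y_x$ by (1). Every element of $Y$ is $T_p(x)$ with $p\in L\subseteq K(\delta S)$, so it is uniformly recurrent by (6)$\Rightarrow$(1) of Lemma~\ref{l 5.4}.

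For (4), let $x$ be uniformly recurrent and $p\in\delta S$. Corollary~\ref{c 5.1}(1) gives $\delta S=U(x)$, so $p\in U(x)$, which is exactly the statement that $T_p(x)$ is uniformly recurrent. Alternatively, argue directly: choose $u\in K(\delta S)$ with $T_u(x)=x$, so $T_p(x)=T_{pu}(x)$ with $pu\in K(\delta S)$, and apply Lemma~\ref{l 5.4}(6).

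For (3), let $x$ be uniformly recurrent. By Lemma~\ref{l 5.4} choose an idempotent $e\in L$ with $T_e(x)=x$. Then for every $p\in\delta S$ we have $T_p(x)=T_{pe}(x)$ with $pe\in L$, so $Y=\{T_p(x):p\in\delta S\}\subseteq Y_x$. Conversely $Y_x\subseteq Y$ because $L\subseteq\delta S$. Hence $Y=Y_x$, and $Y$ is minimal by (1).

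The only real obstacle is making sure compositions are legitimate in the partial setting. Corollary~\ref{action of delta s} requires the right-hand factor to lie in $\delta S$, and this holds at each use above. Closedness of $L$ follows from $L=\delta S\,r$ together with the continuity of $\rho_r$ on the compact right topological semigroup $\delta S$ (Theorem~\ref{t 2.1}).
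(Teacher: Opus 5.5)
Your proposal is correct and follows the paper's proof essentially step for step. The shared steps are: closedness and invariance of $\{T_p(x):p\in L\}$, minimality via $L=\delta S\,p$ (the paper uses the equivalent $Lq=L$), part (2) via Lemma~\ref{l 5.4}, and part (3) by showing that the $\delta S$-orbit equals the $L$-orbit. Your closedness argument, using compactness of $L=\delta S\,r$, is a cleaner version of the paper's net argument. The only real deviation is in (4): you use Corollary~\ref{c 5.1}(1), or directly $T_p(x)=T_{pu}(x)$ with $pu\in K(\delta S)$, instead of deducing it from (3) and (2) as the paper does. This is equally valid and not circular, since Corollary~\ref{c 5.1} rests only on Lemma~\ref{l 5.4}.
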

\begin{proof}
\begin{enumerate}
 \item Suppose that $Y$ is minimal among all closed and invariant subspaces of $X$. Pick $x\in Y$
 and let $Z=\{T_p(x): p\in L\}$. We show that $Z$ is a closed and invariant subspace of $Y$ and this is equal
 to $Y$. If $p\in L$ and $q\in \delta S$, then $T_q(T_p(x))=T_{qp}(x)$ and $qp\in L$. So $Z$ is invariant  and
 obviously $Z \subseteq Y$. To prove $Z$ is closed, it is enough to show that any net in $Z$ has cluster point in $Z$.\\
 To  this end, let $\langle p_{\alpha}\rangle_{\alpha\in D}$ be a net in $L$ and pick a cluster point $p$ in $L$ of
$\langle p_{\alpha}\rangle_{\alpha\in D}$.
 Then $T_p(x)$ is a cluster point of $\langle T_{p_{\alpha}}(x)\rangle_{\alpha\in D}$.

 Conversely. Let $x\in X$ and $Y=\{T_p(x): p\in L\}$. Then  $Y$ is invariant  and is closed as above.
 We now show that $Y$ is minimal among all closed invariant subspaces of $X$. Suppose that $Z$ is a subspace of
 $Y$ which is closed and invariant. We shall show that $Y \subseteq Z$. So let $y\in Y$ and pick $z\in Z$.
 Then $y=T_p(x)$ and $z=T_q(x)$ for some $p$ and $q$ in $L$. Since $Lq=L$, there exists $r\in L$ such that $rq=p$.
It follows that $T_r(z)=T_r(T_q(x))=T_{rq}(x)=T_p(x)=y$ and thus $y\in Z$ as required.

\item Let $Y$ be a subspace of $X$, which  is minimal among all closed and invariant subspaces of $X$  and $x\in Y$. Pick $y\in X$ such that
 $Y=\{T_p(x): p\in L\}$. Pick $p\in L$ such that $x=T_p(y)$. By Lemma \ref{l 5.4}, $x$ is uniformly recurrent.

 \item Let $x\in X$ be uniformly recurrent  and $Y=\{T_p(x): p\in \delta S\}$. By Lemma \ref{l 5.4}, pick $q\in L$
 such that $T_q(x)=x$. By (1), it suffices to show that $Y=\{T_p(x): p\in L\}$. To prove this, let $y\in Y$ and pick
 $p\in \delta S$ such that $y=T_p(x)$. Then $y=T_p(T_q(x))=T_{pq}(x)$ and $pq\in L$ as required.

 \item
Let $x\in X$ be uniformly recurrent  and $Y=\{T_p(x): p\in \delta S\}$. By (3) $Y$ is minimal among all
 closed and invariant subspaces of $X$  so (2) applies.

 \end{enumerate}
\end{proof}
\begin{theorem}\label{t 5.2} Let $S$ be an adequate partial semigroupand
 $x\in \delta S$. Statements (1) and (2) are equivalent and imply (3). If $e\delta S$ has a left cancelable element, all
 three are equivalent.
 \begin{enumerate}
  \item $x\in K(\delta S)$.
\item  $x\in X$ is uniformly recurrent  in the dynamical system $(\beta S,  \langle \lambda_s \rangle_{s \in S})$.

\item $\delta Sx$ is a minimal left ideal of $\delta S$.

 \end{enumerate}

\end{theorem}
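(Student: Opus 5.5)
We work in the dynamical system $(\beta S,\langle\lambda_s\rangle_{s\in S})$ and compute its extended action on $\delta S$. For $p,x\in\delta S$ we expect
\[
\lambda_p(x)=p\text{-}\lim_{s\in S}\lambda_s(x)=p\text{-}\lim_{s\in S} s\cdot x=p\cdot x .
\]
Here we use that $\lambda_s(x)=s\cdot x$ for $x\in\delta S$, and that $\rho_x$ is continuous, which is Lemma~\ref{l 5.2} read with $q=x$. So on $\delta S$ the enveloping action is left multiplication. Since $x\in\delta S$ and $\delta S$ is a semigroup, every $\lambda_p(x)$ with $p\in\delta S$ stays in $\delta S$. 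With this in hand we can apply Lemma~\ref{l 5.4} with $X=\beta S$.

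\textbf{(1)$\Rightarrow$(2).} Let $x\in K(\delta S)$ and pick a minimal left ideal $L$ with $x\in L$. Since $L$ is a minimal left ideal of a compact right topological semigroup, there is an idempotent $e\in L$ with $x\in Le$; for instance, take $e$ to be the identity of the group $L\cap x\delta S$. Then $ex=x$, that is, $\lambda_e(x)=x$. By Lemma~\ref{l 5.4} ((3)$\Rightarrow$(1)), $x$ is uniformly recurrent.

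\textbf{(2)$\Rightarrow$(1).} If $x$ is uniformly recurrent, Lemma~\ref{l 5.4} ((1)$\Rightarrow$(5)) gives $q\in K(\delta S)$ with $\lambda_q(x)=x$. Hence $x=qx\in K(\delta S)$, because $K(\delta S)$ is an ideal of $\delta S$ and $x\in\delta S$.

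\textbf{(1)$\Rightarrow$(3).} If $x\in K(\delta S)$, then $x$ lies in some minimal left ideal $L$. Then $\delta S x$ is a left ideal contained in $L$, so $\delta S x=L$ by minimality.

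\textbf{(3)$\Rightarrow$(1) under the cancellation hypothesis.} This is the main obstacle. Suppose $\delta S x=L$ is minimal, and let $e\in L$ be an idempotent, so $e=yx$ for some $y\in\delta S$. Note that $x$ itself need not lie in $L$. The plan follows the argument of \cite{hindrec}. Write $e\delta S$ for the minimal right ideal containing $e$, and take $u=xe$. Then $u\in L\subseteq K(\delta S)$ and $yu=yxe=e$. Suppose $e\delta S$ contains a left cancelable element $c$. Since $e\delta S$ is a minimal right ideal, left cancelability should propagate from $c$ to every element of $e\delta S$: using $e\delta S=c\,\delta S$ and the group structure of $e\delta S\cap L$, we aim to show that $e$ itself is left cancelable in $\delta S$.

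Granting that $e$ is left cancelable, compute
\[
e\cdot(xe)=(yx)xe \qquad\text{and}\qquad e\cdot x=yxx .
\]
From these, $e(x)=e(exe)$ once we know $ex\in L$, which holds because $ex\in\delta S x=L$. Then $exe=ex$, since $ex\in L$ and $e$ is a right identity for $L$. Cancelling $e$ on the left then yields $x=xe\in L$, which is (1). The delicate point is justifying that a single left cancelable element of $e\delta S$ forces $e$ to be left cancelable, or alternatively choosing the right idempotent $e$ so that the cancellation step goes through. This is where I expect to spend the real effort, possibly re-indexing via the group $L\cap e\delta S$.
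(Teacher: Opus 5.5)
Your arguments for (1)$\Rightarrow$(2), (2)$\Rightarrow$(1) and (1)$\Rightarrow$(3) are essentially the paper's: the identity of the group containing $x$ plus Lemma~\ref{l 5.4}, then $x=qx\in K(\delta S)$, then minimality of $L$. One small point: $x\in Le$ only yields $xe=x$; it is your choice of $e$ as the identity of $L\cap x\,\delta S$ that gives $\lambda_e(x)=ex=x$.

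The gap is in (3)$\Rightarrow$(1), at exactly the step you leave open, and the target itself is wrong. If an idempotent $e$ is left cancelable, then $e(ea)=ea$ gives $ea=a$ for all $a\in\delta S$. So $e$ is a left identity of $\delta S$, and for $e\in K(\delta S)$ this forces $\delta S=e\,\delta S\subseteq K(\delta S)$. Your plan can therefore only work when $K(\delta S)=\delta S$.

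Under the literal hypothesis that the minimal right ideal $e\,\delta S$ contains a left cancelable $c$, you are in fact forced into this degenerate case. If $g$ is the identity of the maximal group of $K(\delta S)$ containing $c$, then $c(ga)=(cg)a=ca$ gives $ga=a$ for all $a$. But then (1) holds for every $x$, and in any case you have not proved it.

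The paper's proof actually assumes only that $\delta S$ has a left cancelable element $z$; the ``$e$'' in the statement looks like a stray symbol. Under that hypothesis your key claim is false. Take $S=\mathbb{N}$, so $\delta S=\beta\mathbb{N}$ and $1$ is left cancelable. No idempotent $e\in K(\beta\mathbb{N})$ is a left identity, since $e+1\in K(\beta\mathbb{N})$ whereas $1\notin K(\beta\mathbb{N})$.

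The repair is to cancel the given element rather than $e$, and you already have the identity you need. You observed that $ex\in\delta S x=L$ and that $e$ is a right identity for $L=\delta S e$, so $exe=ex$. Do the same with $z$: $zx\in\delta S x=L$, hence $zxe=zx$. Left cancellation of $z$ then gives $x=xe\in\delta S e=L\subseteq K(\delta S)$. This is exactly the paper's argument. It works wherever $z$ lies in $\delta S$, and it makes the auxiliary $y$, $u=xe$ and any propagation of cancelability through $e\,\delta S\cap L$ unnecessary.
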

\begin{proof}

 (1) implies (2). Let $x\in K(\delta S)$ and let $u$ be the identity of the group in $K(\delta S)$ to
 which $x$ belongs. Then $x=\lambda_u(x)$ so by Lemma \ref{l 5.4}, $x$ is uniformly recurrent  in the dynamical
 system $(\beta S,  \langle \lambda_s \rangle_{s \in S})$.

(2) implies (1). Let $x$ be uniformly recurrent in the dynamical system $(\beta S,  \langle \lambda_s \rangle_{s \in S})$.
By Lemma \ref{l 5.4}, there exists $q\in K(\delta S$ such that $\lambda_q(x)=x$. Then $x=qx\in K(\delta S)$.

(1) implies (3). Assume that $x\in K(\delta S)$ and pick the minimal left ideal $L$ of $\delta S$ such that $x\in L$.
Then $\delta Sx$ is a left ideal of $\delta S$ contained in $L$. So $L=\delta Sx$. Now assume that $\delta S$ has a
left cancelable element $z$ and $\delta Sx$ is a minimal left ideal of $\delta S$. Pick an idempotent $u\in \delta Sx$.
Then $zx\in \delta S$. So by \cite[Lemma 1.30]{hindalg}, $zx=zxu$ and therefore
$x=xu\in \delta Sx\subseteq K(\delta S)$.
\end{proof}

\begin{corollary}\label{c 5.2} Let  $S$ be an infinite adequate partial semigroup  and $x\in K(\delta S)$.
Then $\delta S\subseteq U(x)$ with respect to the dynamical system $(\beta S,  \langle \lambda_s \rangle_{s \in S})$.
\end{corollary}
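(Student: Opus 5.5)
The plan is to reduce the corollary to two results already in hand: Theorem \ref{t 5.2}, which identifies the uniformly recurrent points of $(\beta S,\langle \lambda_s\rangle_{s\in S})$ lying in $\delta S$ with the elements of $K(\delta S)$, and Corollary \ref{c 5.1}(1), which says that $U(x)=\delta S$ whenever $x$ is uniformly recurrent. The proof then has two steps.

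\textbf{Step 1: $x$ is uniformly recurrent.} Let $x\in K(\delta S)$. By the implication (1)$\Rightarrow$(2) of Theorem \ref{t 5.2}, $x$ is uniformly recurrent in $(\beta S,\langle \lambda_s\rangle_{s\in S})$. Directly, let $u$ be the identity of the maximal group of $K(\delta S)$ containing $x$. Then $\lambda_u(x)=ux=x$, and since $u\in K(\delta S)$, Lemma \ref{l 5.4}((5)$\Rightarrow$(1)) gives uniform recurrence.

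\textbf{Step 2: conclude.} Corollary \ref{c 5.1}(1) now gives $U(x)=\delta S$, so in particular $\delta S\subseteq U(x)$.

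To make Step 2 self-contained, fix $p\in\delta S$. We need $\lambda_p(x)$ to be uniformly recurrent.
\begin{itemize}
\item Here $\lambda_p(x)$ is read as $p$-$\lim_{s}\lambda_s(x)=px$.
\item By Corollary \ref{action of delta s}, $\lambda_p(\lambda_u(x))=\lambda_{pu}(x)$, so $\lambda_p(x)=\lambda_p(\lambda_u(x))=\lambda_{pu}(x)$.
\item Since $pu\in K(\delta S)$ and $K(\delta S)$ is an ideal, Lemma \ref{l 5.4}((6)$\Rightarrow$(1)) shows $\lambda_p(x)$ is uniformly recurrent.
\item Alternatively, $px\in K(\delta S)$ directly, so Theorem \ref{t 5.2} applies to $px$.
\end{itemize}

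\textbf{Main obstacle: the definition of $\lambda_p$.} The maps $\lambda_s$ are defined piecewise: they send $\beta S\setminus\delta S$ to a fixed point $q$. So we must check that, on $\delta S$, $p$-$\lim_s\lambda_s(x)$ really equals $px$. Since $x\in\delta S$, for each $s$ we have $\lambda_s(x)=sx$. Then $p$-$\lim_s sx=px$ follows from Lemma \ref{l 5.2}, via continuity of $\rho_x$.

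The hypothesis that $S$ is infinite is only needed so that $\delta S$ and the construction of $\lambda_s$ (choosing $q\in\beta S\setminus\delta S$ when $S$ is not a semigroup) are meaningful. Beyond that it plays no role in the argument.
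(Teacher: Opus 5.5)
Your proof is correct and follows the paper's route. The implication (1)$\Rightarrow$(2) of Theorem~\ref{t 5.2} makes $x$ uniformly recurrent; the paper then cites Lemma~\ref{l 5.5}(4) where you cite Corollary~\ref{c 5.1}(1), but these are the same fact, proved by the same $T_{pu}$ argument you sketch. One small correction to your closing remark: infiniteness of $S$ is not needed even for meaningfulness, since adequacy alone gives $\delta S\neq\emptyset$, and if $S$ is not a semigroup then any $t\notin\phi(s)$ is a principal ultrafilter lying outside $\delta S$.
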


\begin{proof}
By Theorem \ref{t 5.2}, $x$ is uniformly recurrent, so by Lemma \ref{l 5.5},
$\delta S\subseteq U(x)$.
\end{proof}

\begin{corollary}\label{c 5.3} Let $S$ be an adequate partial semigroupand  and $p, q\in e\delta S$. Statements
(1) and (2) are equivalent and imply (3). If $\delta S$ has a left cancelable element, all three statements are equivalent
 \begin{enumerate}
  \item  $q+p\in K(\delta S)$.
 \item  $q\in U(p)$ with respect to the dynamical system $(\beta S,  \langle \lambda_s \rangle_{s \in S})$.
 \item  $\delta Sqp$ is a minimal left ideal of $\delta S$.
 \end{enumerate}
 \end{corollary}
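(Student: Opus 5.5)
The plan is to reduce the corollary to Theorem~\ref{t 5.2} applied to the single point $x=q+p$ (the paper writes the operation additively here, so $q+p$ means $qp$ in $\delta S$). Since $p,q\in\delta S$ and $\delta S$ is a semigroup by Theorem~\ref{t 2.1}, we have $qp\in\delta S$. Theorem~\ref{t 5.2} then gives three facts:
\begin{itemize}
\item $qp\in K(\delta S)$ if and only if $qp$ is uniformly recurrent in $(\beta S,\langle\lambda_s\rangle_{s\in S})$;
\item either of these implies that $\delta S\,qp$ is a minimal left ideal of $\delta S$;
\item under the left cancelability hypothesis, all three are equivalent.
\end{itemize}
So the whole task is to identify statement (2), $q\in U(p)$, with ``$qp$ is uniformly recurrent''.

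To do this I use the definition of $U(p)$ in Definition~\ref{d 5.2}. For $q\in\delta S$, we have $q\in U(p)$ exactly when $\lambda_q(p)$ is uniformly recurrent. I then compute $\lambda_q(p)=qp$. Here $\lambda_q$ is the continuous extension defined by $\lambda_q(p)=q\text{-}\lim_{s\in S}\lambda_s(p)$. Since $p\in\delta S$, the definition of $\lambda_s$ gives $\lambda_s(p)=s\cdot p$ for every $s$, so $\lambda_q(p)=q\text{-}\lim_{s} s\cdot p$. This equals $qp$ by Lemma~\ref{l 5.2}, or equivalently by Corollary~\ref{action of delta s} with $x=p$ regarded as $T_p$ applied to a suitable point. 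Alternatively, it follows from the right continuity of $\rho_p$ on $\delta S$ together with the fact that $s\cdot p$ agrees with the product in $\beta S$ when $p\in\overline{\phi(s)}$, which holds because $p\in\delta S$.

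With $\lambda_q(p)=qp$ in hand:
\begin{itemize}
\item (1)$\Leftrightarrow$(2) is exactly (1)$\Leftrightarrow$(2) of Theorem~\ref{t 5.2} applied to $x=qp$.
\item (1)$\Rightarrow$(3) is (1)$\Rightarrow$(3) of that theorem.
\item (3)$\Rightarrow$(1), under the assumption that $\delta S$ has a left cancelable element, is the last part of Theorem~\ref{t 5.2} applied to $x=qp$.
\end{itemize}

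The main obstacle I expect is the identification $\lambda_q(p)=qp$. The maps $\lambda_s$ are defined piecewise, sending points of $\beta S\setminus\delta S$ to the fixed point $q$ chosen in that definition, and that fixed point must not be confused with the $q$ of the statement. So I must check that the limit $q\text{-}\lim_s \lambda_s(p)$ only ever uses the branch $s\cdot p$. This holds because $p$ itself lies in $\delta S$, so the other branch is never evaluated. Once this is checked, the rest is bookkeeping. One further point: the hypothesis ``$p,q\in e\delta S$'' should only be needed to guarantee $p,q\in\delta S$; no property of $e$ itself is used.
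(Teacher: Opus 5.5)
Your proposal is correct and is the paper's proof: the paper also notes that $q\in U(p)$ if and only if $\lambda_q(p)$ is uniformly recurrent, that $\lambda_q(p)=qp$, and then applies Theorem~\ref{t 5.2} to the point $qp\in\delta S$. The paper asserts $\lambda_q(p)=qp$ without justification; your argument supplies it by computing $\lambda_q(p)=q\text{-}\lim_{s\in S}s\cdot p=qp$, which is valid because $p\in\delta S$, so only the $s\cdot p$ branch of $\lambda_s$ is ever evaluated.
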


\begin{proof}
We have that $q\in U(p)$ if and only if $\lambda_q(p)$ is uniformly recurrent  and
 $\lambda_q(p)=qp$ so Theorem \ref{t 5.2} applies.
\end{proof}

\begin{corollary}\label{c 5.4}  Let $S$ be an adequate partial semigroupand. The following are equivalent.
\begin{enumerate}
 \item There exists $p\in \delta S\setminus K\delta S)$ such that $K(\delta S)\subset U(p)$ with respect to the dynamical
system $(\beta S,  \langle \lambda_s \rangle_{s \in S})$.
\item $K(\delta S)$ is not prime.
\end{enumerate}
\end{corollary}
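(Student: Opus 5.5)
This mirrors the corresponding result for semigroups in \cite{hindrec}, with $\beta S$ replaced by $\delta S$ and $U(p)$ read inside $\delta S$ as in Definition~\ref{d 5.2}. The ideal $K(\delta S)$ is called \emph{prime} if whenever $q,r\in\delta S$ and $qr\in K(\delta S)$, one has $q\in K(\delta S)$ or $r\in K(\delta S)$. The one tool needed is the translation already used in the proof of Corollary~\ref{c 5.3}. For $p,q\in\delta S$ we have $\lambda_q(p)=qp$, so $q\in U(p)$ exactly when $qp$ is uniformly recurrent in $(\beta S,\langle\lambda_s\rangle_{s\in S})$. Here $\lambda_q$ is the continuous extension, and it agrees with $q$-$\lim_{s}\lambda_s(p)=qp$ on $\delta S$ by Lemma~\ref{l 5.2}. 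By the equivalence (1)$\Leftrightarrow$(2) of Theorem~\ref{t 5.2}, this holds exactly when $qp\in K(\delta S)$. Hence, for $p\in\delta S$,
\[
U(p)=\{q\in\delta S: qp\in K(\delta S)\}.
\]
Both implications reduce to this description.

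(1)$\Rightarrow$(2). Take $p\in\delta S\setminus K(\delta S)$ with $K(\delta S)\subsetneq U(p)$, the inclusion being strict. Then there is $q\in U(p)\setminus K(\delta S)$. By the description above, $qp\in K(\delta S)$, while neither $q$ nor $p$ lies in $K(\delta S)$. So $K(\delta S)$ is not prime.

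(2)$\Rightarrow$(1). Suppose $K(\delta S)$ is not prime, and pick $q,p\in\delta S\setminus K(\delta S)$ with $qp\in K(\delta S)$. Then $q\in U(p)$ and $q\notin K(\delta S)$. By Corollary~\ref{c 5.1}(2) we also have $K(\delta S)\subseteq U(p)$, so the inclusion is proper, and $p\notin K(\delta S)$ as required.

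The only real obstacle is interpreting the statement correctly. The symbol $\subset$ must be read as proper inclusion, since Corollary~\ref{c 5.1}(2) makes non-strict inclusion automatic. We must also make sure the identity $\lambda_q(p)=qp$ holds for $p\in\delta S$ when $S$ is not a semigroup. I expect this to follow because $\lambda_s(p)=s\cdot p$ on $\delta S$, and the $q$-limit of $s\cdot p$ equals $qp$ by Lemma~\ref{l 5.2}. Once these points are settled, the rest is the short set-theoretic argument above.
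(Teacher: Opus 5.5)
Your proof is correct and takes the paper's route. The paper just says the result is immediate from Corollary~\ref{c 5.3}, i.e.\ from $q\in U(p)\iff qp\in K(\delta S)$ via $\lambda_q(p)=qp$ and Theorem~\ref{t 5.2}, which is exactly your description of $U(p)$ followed by the short primeness argument (and reading $\subset$ as proper inclusion, and working with $p,q\in\delta S$ rather than the garbled hypotheses of Corollary~\ref{c 5.3}, are the right choices).
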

\begin{proof}
The proof is an immediate consequence of Corollary \ref{c 5.3}.
\end{proof}

\section{Dynamical characterization of members of idempotent  ultrafilters}
\begin{definition}\label{d 3.7} Let $S$ be a commutative adequate partial semigroup,
and $(X,\langle T_s\rangle _{s \in S})$ be a
dynamical system, $x$ and $y$ be two points in $X$, and $\mathcal{K}$ be a filter on $S$.
The pair $(x,y)$ is called jointly $\mathcal{K}$-recurrent if and only if for
every neighbourhood $U$ of $y$ we have $\{ s \in S: T_s(x) \in U$ and
$T_s(y) \in U \} \in \mathcal{L}(\mathcal{K})=\{A\subseteq S: S\setminus A \not \in \mathcal{K}\}$.
\end{definition}
\begin{lemma}\label{lem:technical}
Let $(X, \langle T_s \rangle_{s \in S})$ be a dynamical system, let $x, y \in X$, and let $\mathcal{K}$ be a filter on $S$ such that $\overline{\mathcal{K}}$ is a compact subsemigroup of $\beta S$.
The following statements are equivalent:
\begin{enumerate}
    \item The pair $(x, y)$ is jointly $\mathcal{K}$-recurrent.
    \item There exists $p \in \overline{\mathcal{K}}$ such that
    \[
        T_p(x) = y = T_p(y).
    \]
    \item There exists an idempotent $p \in \overline{\mathcal{K}}$ such that
    \[
        T_p(x) = y = T_p(y).
    \]
\end{enumerate}
\end{lemma}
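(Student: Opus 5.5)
The plan is to prove the cycle (3)$\Rightarrow$(2)$\Rightarrow$(1)$\Rightarrow$(3), following the proof of \cite[Theorem 3.2]{john}. Throughout, $T_p(z)=p\text{-}\lim_{s\in S}T_s(z)$, and Corollary \ref{action of delta s} gives $T_{pq}=T_p\circ T_q$ whenever the right factor lies in $\delta S$. In the partial setting we assume $\overline{\mathcal{K}}\subseteq\delta S$, i.e.\ each $\sigma(H)\in\mathcal{K}$; this is the context in which $\overline{\mathcal{K}}$ can be a subsemigroup under the operation of $\delta S$.

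The implication (3)$\Rightarrow$(2) is trivial. For (2)$\Rightarrow$(1), let $p\in\overline{\mathcal{K}}$ with $T_p(x)=y=T_p(y)$, and let $U$ be a neighbourhood of $y$. The set $A_1=\{s:T_s(x)\in U\}$ is in $p$, because $T_p(x)=y$ is the $p$-limit of $T_s(x)$. Likewise $A_2=\{s:T_s(y)\in U\}\in p$. Hence $A_1\cap A_2\in p$. By Theorem \ref{t new 2.1}(1), every member of $p$ lies in $\mathcal{L}(\mathcal{K})$, which is exactly joint $\mathcal{K}$-recurrence.

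For (1)$\Rightarrow$(3), first put
\[
M=\{p\in\overline{\mathcal{K}}: T_p(x)=y=T_p(y)\}.
\]
\emph{Step 1: $M\neq\emptyset$.} Let $\mathcal{N}$ be the neighbourhood filter of $y$. The family $\mathcal{B}=\{\{s:T_s(x)\in U,\ T_s(y)\in U\}:U\in\mathcal{N}\}$ is closed under finite intersections, and by (1) it is contained in $\mathcal{L}(\mathcal{K})$. Theorem \ref{t new 2.1}(2) then gives $p\in\overline{\mathcal{K}}$ with $\mathcal{B}\subseteq p$. Since $X$ is Hausdorff, this forces $T_p(x)=y=T_p(y)$.

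\emph{Step 2: $M$ is closed.} For fixed $z$, the map $p\mapsto T_p(z)$ is continuous on $\beta S$, being the continuous extension of $s\mapsto T_s(z)$. Thus $M$ is the intersection of two closed sets with the closed set $\overline{\mathcal{K}}$, and so it is compact.

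\emph{Step 3: $M$ is a subsemigroup.} Take $p,q\in M$. Since $q\in\delta S$, Corollary \ref{action of delta s} gives $T_{pq}(x)=T_p(T_q(x))=T_p(y)=y$, and similarly $T_{pq}(y)=y$. Also $pq\in\overline{\mathcal{K}}$ by hypothesis. Since $\rho_q$ is continuous, $M$ is a compact Hausdorff right topological semigroup, and Ellis's theorem supplies an idempotent in $M$, proving (3).

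The main obstacle is that all the composition identities require the right-hand factor to lie in $\delta S$, since $T_{st}=T_sT_t$ only holds for $t\in\phi(s)$. I would handle this by stating explicitly at the start that $\overline{\mathcal{K}}\subseteq\delta S$, which the subsemigroup hypothesis needs anyway in the partial setting. If that is not granted, the fallback is to intersect with $\delta S$ by adjoining the sets $\sigma(H)$ to $\mathcal{K}$.
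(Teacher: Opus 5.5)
Your proof is correct and is essentially the paper's argument: Theorem~\ref{t new 2.1}(2) produces a point of $M=\{p\in\overline{\mathcal{K}}:T_p(x)=y=T_p(y)\}$, and Ellis's theorem applied to the compact semigroup $M$ gives the idempotent. Your closedness argument via continuity of $p\mapsto T_p(z)$ is a tidier version of the paper's basic-neighbourhood argument.

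You actually do more than the paper. It never proves (2)$\Rightarrow$(1), which Theorem~\ref{thm:main-result} relies on, and it uses $T_{qr}=T_q\circ T_r$ without noting that this needs $r\in\delta S$. Drop your closing fallback of adjoining the sets $\sigma(H)$ to $\mathcal{K}$: $\mathcal{L}$ of the enlarged filter is smaller than $\mathcal{L}(\mathcal{K})$, so hypothesis (1) would no longer suffice.
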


\begin{proof}
\textbf{(a) $\implies$ (b).}
For each neighborhood $U$ of $y$, define
\[
B_U = \{ s \in S : T_s(x) \in U \text{ and } T_s(y) \in U \}.
\]
Since $B_{U \cap V} = B_U \cap B_V$ for any neighborhoods $U, V$ of $y$, the collection
\[
\mathcal{B} = \{ B_U : U \text{ is a neighborhood of } y \}
\]
is closed under finite intersections.

By assumption, $\mathcal{B} \subseteq \mathcal{L}(\mathcal{K})$, the filter generated by $\mathcal{K}$.
Hence, by the standard ultrafilter lemma, there exists $p \in \overline{\mathcal{K}}$ such that $\mathcal{B} \subseteq p$.

Then for every neighborhood $U$ of $y$:
\[
\{ s \in S : T_s(x) \in U \} \in p \quad \text{and} \quad \{ s \in S : T_s(y) \in U \} \in p,
\]
which implies $T_p(x) = y = T_p(y)$.

\medskip
\textbf{(b) $\implies$ (c).}
Let
\[
M = \{ p \in \overline{\mathcal{K}} : T_p(x) = y = T_p(y) \}.
\]
It suffices to show that $M$ is a compact subsemigroup of $\beta S$.

\textit{Compactness:} $M$ is nonempty by assumption.
To show $M$ is closed, let $p \not\in M$. Then either $T_p(x) \neq y$ or $T_p(y) \neq y$.

If $T_p(x) \neq y$, pick a neighborhood $U$ of $y$ such that
\[
A := \{ s \in S : T_s(x) \in U \} \notin p.
\]
Then $S \setminus A \in p$, and $\overline{S \setminus A}$ is a basic neighborhood of $p$ in $\beta S$ that misses $M$.
The case $T_p(y) \neq y$ is similar. Hence $M$ is closed, and therefore compact.

\textit{Subsemigroup:} Let $q, r \in M$. Then
\[
T_{qr}(x) = T_q(T_r(x)) = T_q(y) = y \quad \text{and} \quad T_{qr}(y) = T_q(T_r(y)) = T_q(y) = y.
\]
Thus $qr \in M$, so $M$ is a subsemigroup.

\medskip
By Ellis's lemma (every compact subsemigroup of $\beta S$ contains an idempotent), $M$ contains an idempotent $p$, which proves (c).
\end{proof}

\begin{lemma}\label{shift of sequence}
Let $S$ be an adequate partial semigroup.
Let $X=\prod_{s\in S}\{0,1\}$ and for each $s\in S$ define $T_{s}\colon X\to X$ by
\[
T_{s}(x)(t)=
\begin{cases}
x(ts), & \text{if } s\in \phi(t),\\[4pt]
0, & \text{if } s\notin \phi(t).
\end{cases}
\]
Then $\left(X,\langle T_{s}\rangle_{s\in S}\right)$ is a dynamical system.
\end{lemma}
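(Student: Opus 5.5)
We need to check three things for $X=\prod_{s\in S}\{0,1\}$ with the product topology: $X$ is compact Hausdorff, each $T_s$ is continuous, and $T_s\circ T_t=T_{st}$ whenever $t\in\phi(s)$. Compactness and the Hausdorff property follow from Tychonoff's theorem, since $\{0,1\}$ is discrete.

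\emph{Continuity.} Fix $s$. It suffices to check that each coordinate map $x\mapsto T_s(x)(t)$ is continuous. If $s\in\phi(t)$, this map is the projection $\pi_{ts}$, which is continuous. Otherwise it is the constant $0$, which is trivially continuous.

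\emph{Composition law.} This is the step that needs care, because the shift here acts by right multiplication $x(ts)$, whereas the paper's condition pairs $T_sT_t$ with $t\in\phi(s)$. Fix $s,t$ with $t\in\phi(s)$, so $st$ is defined, and fix a coordinate $u\in S$. Computing directly,
\[
T_s(T_t(x))(u)=\begin{cases} T_t(x)(us) & \text{if } s\in\phi(u),\\ 0 & \text{otherwise,}\end{cases}
\qquad
T_t(x)(us)=\begin{cases} x((us)t) & \text{if } t\in\phi(us),\\ 0 & \text{otherwise.}\end{cases}
\]
We compare this with $T_{st}(x)(u)$, which equals $x(u(st))$ if $st\in\phi(u)$ and $0$ otherwise. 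The partial associativity axiom (either side defined implies both are, and they are equal) is what controls the definedness conditions.

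\begin{itemize}
\item If $u(st)$ is defined, then $(us)t$ is defined and equals $u(st)$. This forces $us$ to be defined, that is $s\in\phi(u)$, and $t\in\phi(us)$. Both sides then equal $x(u(st))$.
\item If $u(st)$ is undefined, then $(us)t$ is undefined. Hence either $s\notin\phi(u)$, or $s\in\phi(u)$ but $t\notin\phi(us)$. In both sub-cases the left side is $0$, matching $T_{st}(x)(u)=0$.
\end{itemize}

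There is one subtlety in the first case: deducing that $us$ is defined from the definedness of $(us)t$ is implicit in writing $(us)t$ at all. I would make this explicit. In the second case, if $us$ is defined but $(us)t$ is not, then $u(st)$ is undefined by associativity, so the two sides agree.

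Under this reading the computation gives $T_s\circ T_t=T_{st}$ rather than the reversed order, so no correction is needed. I expect the main obstacle to be checking that the order of composition matches the convention in Definition~\ref{d 2.5}. If it turned out to be reversed, I would instead note that the system is an action of the opposite partial semigroup. Since the section assumes $S$ is commutative, $ts=st$ whenever either is defined, and this reconciles the two orders. I would invoke commutativity at this point and otherwise follow the proof of Lemma~\ref{l 2.2} from \cite[Lemma 2.22]{pdeb25}.
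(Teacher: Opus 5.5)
Your proposal is correct and takes the same route as the paper: continuity through the coordinate maps $\pi_t T_s$ (either $\pi_{ts}$ or constant), and the composition law by computing $T_s(T_t(x))(u)$ coordinatewise. Your explicit appeal to partial associativity, showing that ``$s\in\phi(u)$ and $t\in\phi(us)$'' holds exactly when $st\in\phi(u)$, fills in a step the paper leaves implicit. The closing contingency about reversed order and commutativity is unnecessary and can be dropped: your computation already gives $T_s\circ T_t=T_{st}$ as Definition~\ref{d 2.5} requires, and the lemma does not assume $S$ is commutative.
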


\begin{proof}
Let $s\in S$.
To see that $T_{s}$ is continuous, it is enough to show that $\pi_{t}T_{s}$ is continuous for each $t\in S$.
For $x\in X$,
\[
(\pi_{t}T_{s})(x)=\pi_{t}(T_{s}x)=T_{s}(x)(t),
\]
and
\[
T_{s}(x)(t)=
\begin{cases}
x(ts)=\pi_{ts}(x), & \text{if } s\in\phi(t),\\[4pt]
0, & \text{if } s\notin\phi(t).
\end{cases}
\]
Hence $T_{s}$ is continuous because
\[
\pi_{t}T_{s}=
\begin{cases}
\pi_{ts}, & \text{if } s\in\phi(t),\\[4pt]
\text{constant}, & \text{if } s\notin\phi(t).
\end{cases}
\]

\medskip
Now let $s,t\in S$.
For $x\in X$,
\[
T_{t}(x)(u)=
\begin{cases}
x(ut), & \text{if } t\in \phi(u),\\[4pt]
0, & \text{if } t\notin \phi(u).
\end{cases}
\]
Thus,
\[
T_{s}(T_{t}x)(u)=
\begin{cases}
T_{t}(x)(us), & \text{if } s\in\phi(u),\\[4pt]
0, & s\notin\phi(u).
\end{cases}
\]

So
\[
T_{s}T_{t}(x)(u)=
\begin{cases}
x(ust), & \text{if } s\in\phi(u) \text{ and } t\in\phi(us),\\[4pt]
0, & \text{if } s\notin\phi(u) \text{ or } t\notin\phi(us) .
\end{cases}
\]
Therefore $T_{s}T_{t}=T_{st}$ whenever $t\in \phi(s)$, completing the proof.
\end{proof}

\begin{theorem}\label{thm:main-result}
Let $(S, \cdot)$ be an adequate partial  semigroup, let $\mathcal{K}$ be a filter on $S$ such that $\overline{\mathcal{K}}$ is a compact subsemigroup of $\delta S$, and let $A \subseteq S$.
Then $A$ is a member of an idempotent in $\overline{\mathcal{K}}$ if and only if there exists a dynamical system $(X, \langle T_s \rangle_{s \in S})$, points $x, y \in X$, and a neighborhood $U$ of $y$ such that:
\begin{enumerate}
    \item the pair $(x, y)$ is jointly $\mathcal{K}$-recurrent, and
    \item $A = \{ s \in S : T_s(x) \in U \}$.
\end{enumerate}
\end{theorem}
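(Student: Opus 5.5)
The plan follows the proof of Theorem \ref{t new 2.2} from \cite{john}, with Lemma \ref{lem:technical} doing the dynamical work in one direction and the shift system of Lemma \ref{shift of sequence} used for the other.

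\emph{Sufficiency.} Assume $(x,y)$ is jointly $\mathcal{K}$-recurrent and $A=\{s\in S:T_s(x)\in U\}$.
\begin{enumerate}
\item By Lemma \ref{lem:technical}, (1)$\Rightarrow$(3), there is an idempotent $p\in\overline{\mathcal{K}}$ with $T_p(x)=y=T_p(y)$.
\item Since $T_p(x)=p\text{-}\lim_{s} T_s(x)=y$ and $U$ is a neighbourhood of $y$, we get $\{s:T_s(x)\in U\}\in p$, i.e.\ $A\in p$.
\end{enumerate}
Two details need checking here. Corollary \ref{action of delta s} gives $T_{qr}=T_q\circ T_r$ only when $r\in\delta S$; this is guaranteed because $\overline{\mathcal{K}}\subseteq\delta S$. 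The closedness step in Lemma \ref{lem:technical} must also not depend on $U$ being open: if necessary, shrink to an open $V\subseteq U$ containing $y$, and then $A\supseteq\{s:T_s(x)\in V\}\in p$.

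\emph{Necessity.} Let $p\in\overline{\mathcal{K}}$ be an idempotent with $A\in p$.
\begin{enumerate}
\item Take $X=\prod_{s\in S}\{0,1\}$ with the shift $T_s$ of Lemma \ref{shift of sequence}, which is a dynamical system.
\item Let $x=\chi_A$. Define $y\in X$ coordinatewise by $y(t)=1$ iff $t^{-1}A\in p$, i.e.\ $y=p\text{-}\lim_s T_s(x)$.
\item Let $U=\{z\in X:z(e)=1\}$ for a suitable base point $e$. The natural aim is $T_s(x)(e)=\chi_A(s)$ when defined, so that $A=\{s:T_s(x)\in U\}$.
\end{enumerate}

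This is the step I expect to be the main obstacle. A partial semigroup has no identity, so the evaluation coordinate cannot simply be $e$. I would instead enlarge the space, taking $X=\{0,1\}^{S\cup\{e\}}$ with an adjoined point $e$ satisfying $es=s$ and $T_s(z)(e)=z(s)$. One checks that this is still a dynamical system: continuity holds coordinatewise, and the composition rule follows as in Lemma \ref{shift of sequence}. Then $U=\pi_e^{-1}(\{1\})$ gives exactly $A=\{s:T_s(x)\in U\}$. The same adjustment handles the right-shift convention $x(ts)$ used in Lemma \ref{shift of sequence}; I would rewrite the definition to fit, and where needed use the commutativity assumed in Definition \ref{d 3.7}.

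\emph{Joint recurrence.} It remains to show $(x,y)$ is jointly $\mathcal{K}$-recurrent, and by Lemma \ref{lem:technical} it is enough to show $T_p(x)=y=T_p(y)$. The equality $T_p(x)=y$ holds by definition of $y$. For $T_p(y)=y$:
\begin{enumerate}
\item By Corollary \ref{action of delta s} (valid since $p\in\delta S$), $T_p(y)=T_p(T_p(x))=T_{pp}(x)=T_p(x)=y$.
\item Coordinatewise this amounts to Lemma \ref{l 2.1}: $t^{-1}A\in pp$ iff $\{s:s^{-1}t^{-1}A\in p\}\in p$.
\end{enumerate}
Hence $p$ witnesses Lemma \ref{lem:technical}(2), so $(x,y)$ is jointly $\mathcal{K}$-recurrent, which completes the proof.
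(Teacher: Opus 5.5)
Your proposal is correct and takes essentially the same route as the paper. For sufficiency you use Lemma \ref{lem:technical} to get an idempotent $p\in\overline{\mathcal{K}}$ with $T_p(x)=y=T_p(y)$, whence $A\in p$. For necessity you use the right-shift system on $\{0,1\}^{S\cup\{e\}}$ with $x=\chi_A$, $y=T_p(x)$ and $U=\pi_e^{-1}(\{1\})$, and you get $T_p(y)=T_{pp}(x)=y$ from Corollary \ref{action of delta s}.

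Two small points. Your hedge about using commutativity is unnecessary. The convention $T_s(z)(t)=z(ts)$ for $s\in\phi(t)$, extended by $T_s(z)(e)=z(s)$, already gives $T_sT_t=T_{st}$ for $t\in\phi(s)$ in any adequate partial semigroup, as the theorem requires. You should also state explicitly that $y(e)=1$ because $A\in p$, so that $U$ really is a neighbourhood of $y$.
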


\begin{proof}
\textbf{($\Rightarrow$)}
Let $R = S \cup \{ e \}$ be the  adequate partial semigroup obtained by adjoining an identity $e$ to $S$ (even if $S$ already has an identity).
Give $\{0,1\}$ the discrete topology and $X = \{0,1\}^R$ the product topology. Then $X$ is a compact Hausdorff space.

For each $s \in S$, define $T_s \colon X \to X$ by
\[
T_s(f) = f \circ \rho_s,
\]
where $\rho_s \colon R \to R$ is right multiplication by $s$.
Then $(X, \langle T_s \rangle_{s \in S})$ is a dynamical system (see \cite[Theorem 19.14]{hindalg}).

Let $x = \mathbf{1}_A$ be the characteristic function of $A$, pick an idempotent $r \in \overline{\mathcal{K}}$ with $A \in r$, and put $y = T_r(x)$.
Then
\[
T_r(y) = T_r(T_r(x)) = T_{rr}(x) = T_r(x) = y,
\]
so by , the pair $(x, y)$ is jointly $\mathcal{K}$-recurrent.

Define the (subbasic) neighborhood
\[
U = \{ w \in X : w(e) = y(e) \}.
\]
Observe that $y(e) = 1$: since $y = T_r(x)$, pick $s \in A$ with $T_s(x) \in U$. Then
\[
y(e) = T_s(x)(e) = x(\rho_s(e)) = x(es) = x(s) = \mathbf{1}_A(s) = 1.
\]

Finally, for each $s \in S$:
\[
\begin{aligned}
s \in A &\iff \mathbf{1}_A(s) = 1 \\
&\iff x(s) = 1 \\
&\iff x(es) = 1 \\
&\iff (x \circ \rho_s)(e) = 1 \\
&\iff T_s(x)(e) = y(e) \\
&\iff T_s(x) \in U.
\end{aligned}
\]
Hence, $A = \{ s \in S : T_s(x) \in U \}$.

\medskip
\textbf{($\Leftarrow$)}
Suppose we have a dynamical system $(X, \langle T_s \rangle_{s \in S})$, points $x, y \in X$, and a neighborhood $U$ of $y$ such that $(x, y)$ is jointly $\mathcal{K}$-recurrent and $A = \{ s \in S : T_s(x) \in U \}$.

By , there exists an idempotent $r \in \overline{\mathcal{K}}$ such that $T_r(x) = y = T_r(y)$.
Since $U$ is a neighborhood of $y$, it follows that
\[
A = \{ s \in S : T_s(x) \in U \} \in r.
\]
Thus, $A$ is a member of an idempotent in $\overline{\mathcal{K}}$.
\end{proof}

\section{Quasi-central sets in adequate partial semigroups and their dynamical characterization}
The sets which satisfy the conclusion
of the Central Sets Theorem are called $C$-sets. In \cite{hindcen}, Hindman, Maleki, and Strauss first observed that there are some $C$-sets which are not central sets. Hence they have brought the concept of an important type of $C$-sets called the quasi-central sets. The definition of  quasi-central sets in an arbitrary semigroup was first introduced algebraically in \cite[Definition 1.2]{hindcen} and combinatorially characterized in \cite[Theorem 3.7]{hindcen}. The quasi-central sets also satisfy the conclusion of the Central Sets Theorem which follows from \cite[Theorem 2.2]{de}. In  \cite{burns}, quasi-central sets and their dynamical characterizations were extensively studied. Here in this section, we define quasi-central sets in an adequate  partial semigroup, also we give their dynamical characterizations. Now we describe some basic results about syndetic sets and piecewise syndetic sets in an arbitrary partial semigroup.

\begin{definition}{\cite[Definition 3.3]{JM}}\label{syndetic in partial}
Let $\left(S,\cdot\right)$ be an adequate partial semigroup and let $A\subseteq S$.
\begin{enumerate}
 \item The set $A$ is $\check{c}$-{\it syndetic} if and only if there is some $H\in \mathcal{P}_{f}\left(S\right)$ such that $\sigma\left(H\right)\subseteq\bigcup_{t\in H}t^{-1}A$.
 \item Then $A$  is  {\it syndetic} if  for every left ideal $L$ of $\delta S$, $\overline{A}\cap L\neq\emptyset$.
\end{enumerate}
	\end{definition}
	One can see \cite[Theorem 3.4]{JM} for checking that the notions “syndetic” and and “$\check{c}$-syndetic” are not equivalent, although \cite[Theorem 3.6]{JM} tells that every syndetic set is a $\check{c}$-syndetic set in an adequate partial semigroup. In \cite{pdeb25}, authors introduced a new variation of syndetic set.
\begin{definition}{\cite[Definition 2.6]{pdeb25}}\label{new syndetic set}
Let $S$ be an adequate partial semigroup and let $A\subseteq S$. Then $A$  is  $\check{a}$-syndetic if  and only if for every left ideal $L$ of $\delta S$, $\overline{A}\cap L\neq\emptyset$.
\end{definition}

The following theorem characterizes $\check{a}$-syndetic set combinatorially.

	\begin{theorem}\label{t 4.1}
		Let $S$ be an adequate partial semigroup and let $A\subseteq S$. Then $A$  is  $\check{a}$-syndetic if  and only if for every left ideal $L$ of $\delta S$, $\overline{A}\cap L\neq\emptyset$.
	\end{theorem}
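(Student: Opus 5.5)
As stated, Theorem~\ref{t 4.1} is a tautology. Its right-hand side, ``for every left ideal $L$ of $\delta S$, $\overline{A}\cap L\neq\emptyset$'', is word for word the defining condition of $\check{a}$-syndeticity in Definition~\ref{new syndetic set}. So the plan is simply to unfold Definition~\ref{new syndetic set} in both directions.

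$(\Rightarrow)$ If $A$ is $\check{a}$-syndetic, Definition~\ref{new syndetic set} says that every left ideal $L$ of $\delta S$ satisfies $\overline{A}\cap L\neq\emptyset$.

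$(\Leftarrow)$ If every left ideal $L$ of $\delta S$ satisfies $\overline{A}\cap L\neq\emptyset$, then $A$ meets the definition verbatim, so it is $\check{a}$-syndetic.

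No earlier result is needed. I expect no real obstacle here, only to record explicitly that the two conditions coincide.

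Since the sentence before the theorem announces a \emph{combinatorial} characterization, the authors probably intended a different right-hand side. The most likely candidate is: for every $q\in\delta S$ there is $H\in\mathcal{P}_f(S)$ with $\sigma(H)\cap\bigcup_{t\in H}t^{-1}A\in q$, or some variant of that shape. If that version is wanted, I would argue as follows.

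First, reduce to minimal left ideals. Every left ideal of $\delta S$ contains a minimal one, and every minimal left ideal has the form $\delta S\,q$ with $q$ in it. So $A$ is $\check{a}$-syndetic if and only if $\delta S\,q\cap\overline{A}\neq\emptyset$ for every $q\in\delta S$.

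Second, characterize $\delta S\,q\cap\overline{A}=\emptyset$ via Lemma~\ref{l 2.1}(2). That lemma says $A\in pq$ if and only if $\{x:x^{-1}A\in q\}\in p$. Hence $\delta S\,q\cap\overline{A}\neq\emptyset$ exactly when the set $\{x\in S: x^{-1}A\in q\}$ belongs to some $p\in\delta S$. Since the sets $\sigma(K)$, $K\in\mathcal{P}_f(S)$, generate the filter defining $\delta S$, this happens exactly when $\{x: x^{-1}A\in q\}$ meets every $\sigma(K)$.

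Third, use compactness for the converse direction, mirroring the argument in the proof of Lemma~\ref{l 5.4}. If the combinatorial condition fails, there is a witness $G$ for which the family
\[
\Big\{\sigma(H)\setminus\bigcup_{t\in H}t^{-1}A : H\in\mathcal{P}_f(\sigma(G))\Big\}\cup\{\sigma(K):K\in\mathcal{P}_f(S)\}
\]
has the finite intersection property. Any ultrafilter $r$ containing this family lies in $\delta S$, and it satisfies $\delta S\,r\cap\overline{A}=\emptyset$, contradicting $\check{a}$-syndeticity.

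The main obstacle in this nontrivial version is the bookkeeping in the third step. One must quantify over $H\in\mathcal{P}_f(\sigma(G))$ rather than over arbitrary finite sets, so that every $t\in H$ is guaranteed to act on elements of $\sigma(H)$ and the sets $t^{-1}A$ make sense inside $\phi(t)$.
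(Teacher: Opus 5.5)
Your proof is correct for the statement as printed. Definition~\ref{new syndetic set} already \emph{defines} $\check{a}$-syndeticity by the condition that $\overline{A}\cap L\neq\emptyset$ for every left ideal $L$ of $\delta S$. The theorem therefore restates the definition, and unfolding it in both directions is a complete proof. The paper gives no argument of its own and cites \cite[Theorem 2.7]{pdeb25}. That citation has content only under a combinatorial definition of $\check{a}$-syndeticity, as the sentence announcing a ``combinatorial'' characterization suggests. With the definition actually given here, it adds nothing beyond your observation.

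Your guess at the intended combinatorial condition is wrong, however. Since $\sigma(H)\in q$ for every $q\in\delta S$, your candidate (for every $q\in\delta S$ there is $H$ with $\sigma(H)\cap\bigcup_{t\in H}t^{-1}A\in q$) says only that $t^{-1}A\in q$ for some $t\in S$. It still quantifies over ultrafilters, and it is strictly weaker than $\check{a}$-syndeticity. To see this, take $A=\{s:1\in s\}$ in Example~\ref{e 2.2}. Then $\{1\}^{-1}A=\phi(\{1\})$ lies in every $q\in\delta S$, so the candidate holds with $H=\{\{1\}\}$. Yet $A\cap\phi(\{1\})=\emptyset$, so $\overline{A}\cap\delta S=\emptyset$ and $A$ meets no left ideal of $\delta S$.

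The condition your second and third steps actually handle is this: for every $G\in\mathcal{P}_f(S)$ there exist $H\in\mathcal{P}_f(\sigma(G))$ and $K\in\mathcal{P}_f(S)$ with $\sigma(K)\subseteq\bigcup_{t\in H}t^{-1}A$.
\begin{itemize}
\item Its negation gives exactly your family with the finite intersection property, so your third step shows that $\check{a}$-syndeticity implies it.
\item For the converse, $\sigma(K)\in q$ forces $t^{-1}A\in q$ for some $t\in H\subseteq\sigma(G)$. Hence $\{x:x^{-1}A\in q\}$ meets every $\sigma(G)$, and your second step applies.
\end{itemize}
The reason for restricting to $H\subseteq\sigma(G)$ is that the witnesses $t$ must lie in $\sigma(G)$, which belongs to every $p\in\delta S$. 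It has nothing to do with $t^{-1}A$ being well defined, which is automatic.
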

\begin{proof}
 See \cite[Theorem 2.7]{pdeb25}.
\end{proof}
Now we focus on piecewise syndetic sets. Let us start with the following definition.
\begin{definition}{\cite[Definition 3.1]{HP}}\label{piecewise syndetic in partial}
Let $\left(S,\cdot\right)$ be an adequate partial semigroup and let $A\subseteq S$, $\F$ be the collection of all adequate sequences in $S$, and $\mathcal J_m=\{t\in \N^m: t(1)<t(2)<\ldots<t(m))\}$.
\begin{enumerate}
 \item The set $A$ is {\it piecewise syndetic} if and only if $\overline{A}\cap K(\delta S)\neq\emptyset$.

 \item The set $A$ is a $J$-{\it set} if and only if for all   $F\in \mathcal{P}_{f}\left(\F\right)$ and all   $L\in \mathcal{P}_{f}\left(S\right)$,  there exist $m\in \N$, $a\in S^{m+1}$, $t\in \mathcal J_m$ such that for all $f\in F$, $(\prod_{i=1}^m a(i)\cdot f(t(i)))\cdot a(m+1)\in A\cap \sigma(L)$.

 \item $J(S)=\{p\in \delta S: (\forall A\in p)(A \text{ is a } J\text{-set})\}$.
\end{enumerate}
	\end{definition}

As was the case with “syndetic”, one can see \cite[Theorem 4.6]{JM} for checking that the notions “piecewise syndetic” and and “$\check{c}$- piecewise syndetic” are not equivalent, although \cite[Theorem 4.5]{JM} tells that every piecewise syndetic set is a $\check{c}$-piecewise syndetic set in an adequate partial semigroup.

Now we introduce the definition of quasi-central sets in a commutative adequate partial semigroup.
\begin{definition}\label{d 4.4}
 Let $\left(S,\cdot\right)$ be an adequate partial semigroup and let $A\subseteq S$. Then $A$ is said to be a quasi-central set if and only if there is an idempotent $p$  in  $cl K(\delta S)$ such that $A\in p$.
\end{definition}
Now we show that quasi-central sets satisfy  the Central Sets Theorem in a commutative adequate partial semigroup.

\begin{theorem}\label{t 4.2}
Let $\left(S,\cdot\right)$ be an  adequate partial semigroup. Then $J(S)$  is a compact two-sided ideal of $\delta S$.
\end{theorem}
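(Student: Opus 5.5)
We prove the three properties (closed, nonempty, two-sided ideal) in turn, using the definition of $J$-set in Definition~\ref{piecewise syndetic in partial} and the product rule of Lemma~\ref{l 2.1}.

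\emph{Closedness.} We write $J(S)=\delta S\cap\bigcap\{\beta S\setminus\overline{A}: A\subseteq S \text{ is not a } J\text{-set}\}$. Each $\beta S\setminus\overline A$ is clopen and $\delta S$ is closed, so $J(S)$ is closed. Being a closed subset of the compact space $\delta S$ (Theorem~\ref{t 2.1}), it is compact.

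\emph{Nonemptiness.} First we show that $J$-sets are partition regular in the adequate setting: if $A_1\cup A_2$ is a $J$-set, then one of $A_1,A_2$ is a $J$-set. We prove this by the standard Hales--Jewett style argument (as in \cite[Lemma 14.14.6]{hindalg}), carrying along the extra constraint ``$\in\sigma(L)$''. Given this, the family
\[
\{B: S\setminus B \text{ is not a } J\text{-set}\}\cup\{\sigma(H):H\in\mathcal P_f(S)\}
\]
has the finite intersection property. Indeed, $S$ is a $J$-set, because adequacy lets us choose each product inside $\sigma(L)$. Moreover, each $\sigma(H)$ ``contains'' all of $S$'s $J$-witnesses, since in the definition we may enlarge $L$ to $L\cup H$. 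An ultrafilter extending this family lies in $\delta S$, and all of its members are $J$-sets. Alternatively, we expect $K(\delta S)\subseteq J(S)$: a piecewise syndetic set is a $J$-set, following the argument of \cite[Theorem 14.14.4]{hindalg} adapted via \cite{HP}. Either route gives $J(S)\neq\emptyset$.

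\emph{Ideal property.} Let $p\in J(S)$, $q\in\delta S$, and $A\in qp$ (respectively $A\in pq$).

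\emph{Left ideal.} By Lemma~\ref{l 2.1}, $\{x: x^{-1}A\in p\}\in q$, so we can pick $x$ with $x^{-1}A\in p$. Then $x^{-1}A$ is a $J$-set. Given $F$ and $L$, we apply the $J$-set property of $x^{-1}A$ with $L'=L\cup\{x\}$ (so that products lie in $\phi(x)$ where needed) to get $a,t$ with $(\prod a(i)f(t(i)))a(m+1)\in x^{-1}A\cap\sigma(L')$. Replacing $a(1)$ by $x\cdot a(1)$ gives a product in $A$. We still need it in $\sigma(L)$. To arrange this, we also intersect with $\sigma(L)$ by first choosing $x$ appropriately: either $x\cdot y\in\sigma(L)$ must be checked, or we replace $A$ by $A\cap\sigma(L)$, which is in $qp$ since $qp\in\delta S$.

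\emph{Right ideal.} From $A\in pq$ we get $B=\{x: x^{-1}A\in q\}\in p$, and $B$ is a $J$-set. For given $F,L$, we obtain $a,t$ with $w=(\prod a(i)f(t(i)))a(m+1)\in B\cap\sigma(L)$. Then $w^{-1}A\cap\sigma(L)\in q$ (as $q\in\delta S$), so we pick $y$ in it (also in $\phi(w)$) and replace $a(m+1)$ by $a(m+1)y$. Associativity then gives a product in $A$. The membership in $\sigma(L)$ must again be ensured by intersecting with $\sigma$-sets that lie in the relevant ultrafilters.

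\emph{Main obstacle.} I expect the main difficulty to be the bookkeeping of definedness. In each modification we need the new products to be defined and to lie in $\sigma(L)$. I will handle this by always enlarging the finite set passed to the $J$-set property and by using that $\sigma(H)\in r$ for every $r\in\delta S$. Partition regularity of $J$-sets in the partial setting is the other nontrivial step, and I would try to cite \cite{HP} for it.
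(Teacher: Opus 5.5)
Your outline is correct, and it is more self-contained than the paper, whose proof of Theorem~\ref{t 4.2} is only a citation of \cite[Corollary 3.4]{HP}. You show that closedness and the two-sided ideal property follow formally from Lemma~\ref{l 2.1} and the fact that $\sigma(L)$ belongs to every member of $\delta S$. You then isolate partition regularity of $J$-sets, and hence $J(S)\neq\emptyset$, as the only genuinely combinatorial input, which you import from \cite{HP}. That is the right division of labour, but that step is more than ``carrying along $\sigma(L)$''. The auxiliary sequences in the Hales--Jewett argument interleave terms of \emph{different} adequate sequences, and such products need not be defined in a partial semigroup. So one has to pass to suitably spread-out index blocks, using that every adequate sequence is eventually in each $\sigma(K)$, to make them adequate. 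Your alternative route (``piecewise syndetic sets are $J$-sets'') gives no shortcut: with the definition of piecewise syndetic used in this paper, it is just a restatement of $K(\delta S)\subseteq J(S)$.

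Two bookkeeping points in the ideal part need fixing.

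First, in the right-ideal step the witness depends on $f$. You get $w_f=\bigl(\prod_{i=1}^m a(i)f(t(i))\bigr)a(m+1)\in B$ for each $f\in F$, and you need a single $y\in\bigcap_{f\in F}w_f^{-1}A$. This set is in $q$ because $F$ is finite.

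Second, taking $y\in\sigma(L)$ does not give $w_f y\in\sigma(L)$. The product $l\cdot(w_f y)$ is defined iff $(l\cdot w_f)\cdot y$ is, so what you need is $y\in\phi(l\cdot w_f)$. The clean repair is the one you already found for the left ideal. Once $F$ and $L$ are given, replace $A$ by $A\cap\sigma(L)$, which lies in $pq$ (resp.\ $qp$) because $\delta S$ is a semigroup. Only then choose $B$ and $y$ (resp.\ $x$). This is legitimate because the $J$-set condition quantifies over $L$ before the witnesses are chosen.

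With these adjustments the argument is complete, modulo the cited partition regularity.
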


\begin{proof}
 See \cite[Corollary 3.4]{HP}.
\end{proof}

\begin{theorem}\label{t 4.3}
Let $(S, \cdot)$ be an adequate partial semigroup and  $A$  be  a quasi-central subset of $S$. There exist functions
$m:\ P_f(\F) \rightarrow \N$, $\alpha \in \bigtimes_{F \in \ P_f(\F)} S^{m(F)+1}$, and $\tau \in \bigtimes_{F \in \ P_f(\F)} \mathcal J_{m(F)}$
such that
\begin{enumerate}
\item  if $F, G \in \ P_f(\F)$ and $F \subsetneq G$, then $\tau(F)(m(F)) < \tau(G)(1)$, and
\item  if $n \in \mathbb{N}$, $G_1,G_2, \ldots , G_n \in \ P_f(\F)$, $G_1 \subsetneq G_2\subsetneq
\cdots \subsetneq G_n$, and for each $i \in \{1,2, \ldots ,n\}$, $f_i \in G_i$, then
\end{enumerate}
one has $$\prod_{i=1}^{n}((\prod_{j=1}^{m(G_i)}\alpha(G_i)(j)\cdot f_i(\tau(G_i)(j)))\cdot \alpha(G_i)(m(G_i)+1))\in A.$$

\end{theorem}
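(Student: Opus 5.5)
The plan is to reduce the statement to the known Central Sets Theorem for adequate partial semigroups, phrased in terms of idempotents in $J(S)$. The key observation is that every quasi-central set lies in an idempotent of $J(S)$.

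\textbf{Step 1: every quasi-central set lies in an idempotent of $J(S)$.} Let $p$ be an idempotent in $cl K(\delta S)$ with $A\in p$.
\begin{itemize}
\item By Theorem \ref{t 4.2}, $J(S)$ is a two-sided ideal of $\delta S$. Since $K(\delta S)$ is the smallest two-sided ideal of $\delta S$, we get $K(\delta S)\subseteq J(S)$.
\item Also by Theorem \ref{t 4.2}, $J(S)$ is compact, hence closed in $\beta S$.
\item Therefore $cl K(\delta S)\subseteq J(S)$, and so $p$ is an idempotent in $J(S)$.
\end{itemize}
Concretely, every member of $p$ is a $J$-set, and every member of $p$ is also a member of $\delta S$-type filters, i.e.\ $\sigma(H)\in p$ for all $H\in\mathcal P_f(S)$.

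\textbf{Step 2: the recursive construction.} With $p$ fixed as an idempotent in $J(S)$, we follow the proof of \cite[Theorem 2.2]{de}, adapted to partial semigroups as in \cite{G} or \cite[Theorem 3.3]{HP}. Put $A^\star=\{x\in A: x^{-1}A\in p\}$. By Lemma \ref{l 2.1} and idempotence, $A^\star\in p$, and for $x\in A^\star$ we have $x^{-1}A^\star\in p$.

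We define $m(F)$, $\alpha(F)$, and $\tau(F)$ by induction on $|F|$. The inductive hypothesis is that, for every chain $G_1\subsetneq\cdots\subsetneq G_n=F$ and every choice $f_i\in G_i$, the corresponding product
\begin{itemize}
\item lies in $A^\star$, and
\item is defined together with all the later factors.
\end{itemize}

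\textbf{Step 3: the inductive step.} Given $F$, let $M$ be the finite set of products already obtained from chains ending in proper subsets of $F$. Consider
\[
B=A^\star\cap\bigcap_{x\in M}x^{-1}A^\star\cap\sigma(M).
\]
Each set in this intersection belongs to $p$, so $B\in p$. Hence $B$ is a $J$-set.

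Apply the $J$-set property to the family $F$, using the finite set $L=M$ together with the domain constraints, and discard finitely many initial indices. This yields $m$, $a\in S^{m+1}$ and $t\in\mathcal J_m$ with
\[
t(1)>\max\bigcup_{G\subsetneq F}\tau(G),
\]
such that each product $\bigl(\prod_i a(i)f(t(i))\bigr)a(m+1)$ lies in $B\cap\sigma(L)$.

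To make $t(1)$ exceed the given bound, we pass to tails of the sequences in $F$, which are again adequate sequences. Set $m(F)=m$, $\alpha(F)=a$, and $\tau(F)=t$. Condition (1) then holds by construction. Condition (2), including the requirement that the products are defined, follows because $y\in B$ gives $xy\in A^\star$ for each $x\in M$, and $y\in\sigma(M)$ ensures that $x\cdot y$ is defined.

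\textbf{Main obstacle.} The delicate step is the definedness bookkeeping in the partial setting. Each new block must lie in $\sigma$ of all previously formed products, and the products within a block, $\prod a(i)f(t(i))$, must themselves be defined. Both requirements are handled by the requirement ``$\in\sigma(L)$'' in the definition of a $J$-set and by the adequacy of the sequences. Intersecting with $\sigma(M)$, which belongs to $p$ because $p\in\delta S$, is what makes the induction go through.
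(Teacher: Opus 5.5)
Your proposal is correct and follows the paper's proof. The paper's entire argument is your Step 1 (Theorem \ref{t 4.2} gives $cl K(\delta S)\subseteq J(S)$, so $p$ is an idempotent of $J(S)$ with $A\in p$), after which it simply invokes \cite[Theorem 3.6]{HP}; your Steps 2--3 unpack that citation via the standard De--Hindman--Strauss induction, including the $\sigma(M)$ definedness bookkeeping and the tail shift, which is sound but not needed.
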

\begin{proof}

By Definition \ref{d 4.4}, choose an idempotent $p$  in  $cl K(\delta S)$ such that $A\in p$. Also Theorem \ref{t 4.2} asserts that $cl K(\delta S)\subseteq J(S)$. So, $p$ is an idempotent in $J(S)$ such that $A\in p$. Now use {\cite[Theorem 3.6]{HP}} to conclude the result.
\end{proof}

Now we shall focus on dynamical characterization of quasi-central central sets in a commutative adequate partial semigroup.
Now we introduce the notion of jointly intermittently uniform recurrence
in a commutative adequate partial semigrou just like in an arbitrary semigroup.
\begin{definition}\label{4.9} Let $(S, \cdot)$ be a commutative adequate partial semigroup. Let $(X, \langle T_s\rangle_{s\in S})$ be a dynamical system
and let $x,y \in X$. The pair $(x,y)$ is {\it jointly intermittently uniformly recurrent}
 if and only if for every neighbourhood $U$ of $y$,
the set $\{s \in S : T_s(x) \in U \text{ and } T_s(y)\in U\}$ is  piecewise syndetic.
\end{definition}
\begin{lemma}\label{l 4.10}   Let $(S, \cdot)$ be a commutative adequate partial semigroup and
let $\mathcal{K} = \{A \subseteq S : S \setminus A \text{ is not  piecewise syndetic subset of } S\}$.
Then $\mathcal{K}$ is a filter on $S$ with $cl K(\delta S) = \overline{\mathcal{K}}$,
which is a compact subsemigroup of $\delta S$.
\end{lemma}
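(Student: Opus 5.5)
The plan is to show three things in turn: that $\mathcal{K}$ is a filter, that $\overline{\mathcal{K}}=cl K(\delta S)$, and that this set is a compact subsemigroup of $\delta S$. Throughout I use the definition in the paper: $A$ is piecewise syndetic iff $\overline{A}\cap K(\delta S)\neq\emptyset$.

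\emph{Filter.} First, $\emptyset\notin\mathcal{K}$, since $S$ is piecewise syndetic: $K(\delta S)\neq\emptyset$ and $\overline{S}=\beta S$. Second, $S\in\mathcal{K}$, since $\emptyset$ is not piecewise syndetic. Third, $\mathcal{K}$ is closed upward: if $A\subseteq B$ and $A\in\mathcal{K}$, then $S\setminus B\subseteq S\setminus A$, and a subset of a non-piecewise-syndetic set is not piecewise syndetic. For closure under finite intersections, take $A,B\in\mathcal{K}$. Then $S\setminus(A\cap B)=(S\setminus A)\cup(S\setminus B)$, and
\[
\overline{(S\setminus A)\cup(S\setminus B)}=\overline{S\setminus A}\cup\overline{S\setminus B}.
\]
If this set met $K(\delta S)$, one of the two pieces would meet $K(\delta S)$, which is impossible. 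Hence $A\cap B\in\mathcal{K}$.

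\emph{Identification of $\overline{\mathcal{K}}$.} Note that $A\in\mathcal{K}$ iff $\overline{S\setminus A}\cap K(\delta S)=\emptyset$, which is equivalent to $K(\delta S)\subseteq\overline{A}$. Since $\overline{A}$ is closed, this is also equivalent to $cl K(\delta S)\subseteq \overline{A}$.
\begin{itemize}
\item[($\supseteq$)] Let $p\in cl K(\delta S)$ and $A\in\mathcal{K}$. Then $p\in cl K(\delta S)\subseteq\overline{A}$, so $A\in p$. Thus $\mathcal{K}\subseteq p$ and $p\in\overline{\mathcal{K}}$.
\item[($\subseteq$)] Let $p\notin cl K(\delta S)$. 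Pick a basic clopen $\overline{B}$ with $p\in\overline{B}$ and $\overline{B}\cap cl K(\delta S)=\emptyset$. Then $\overline{B}\cap K(\delta S)=\emptyset$, so $B$ is not piecewise syndetic, i.e.\ $S\setminus B\in\mathcal{K}$. But $S\setminus B\notin p$, so $\mathcal{K}\not\subseteq p$.
\end{itemize}
Hence $\overline{\mathcal{K}}=cl K(\delta S)$. Because $\delta S$ is closed in $\beta S$, $cl K(\delta S)\subseteq\delta S$, and $\overline{\mathcal{K}}$ is compact, being closed in $\beta S$.

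\emph{Subsemigroup.} This is the main step. I would show that $cl K(\delta S)$ is a two-sided ideal of $\delta S$, as in \cite[Theorem 4.44]{hindalg}, using that $\delta S$ is a compact right topological semigroup (Theorem \ref{t 2.1}).
\begin{itemize}
\item[(i)] \emph{Left ideal.} For $q\in\delta S$, the map $\rho_p$ is continuous, but for left multiplication by $q$ we need continuity in the right variable. I would argue instead via right ideals. For $p\in cl K(\delta S)$ and $q\in\delta S$, continuity of $\rho_q$ gives
\[
pq\in\rho_q\bigl(cl K(\delta S)\bigr)\subseteq cl\bigl(K(\delta S)q\bigr)\subseteq cl K(\delta S).
\]
\item[(ii)] \emph{Right ideal, for $q p$.} Write $cl K(\delta S)=cl\bigl(\bigcup_L L\bigr)$ over the minimal left ideals $L$. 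It suffices to show $qp\in cl K(\delta S)$ for $q\in\delta S$. If $q\in K(\delta S)$ this is immediate. In general, approximate $q$ by elements of $S$: $q=q\text{-}\lim s$. For $s\in S$, the map $\lambda_s$ is continuous on $\overline{\phi(s)}\supseteq\delta S$ by Lemma \ref{l 2.1}, and $sK(\delta S)\subseteq\beta S$. The delicate point is that $s\cdot r$ for $r\in K(\delta S)$ need not lie in $\delta S$.
\end{itemize}
To avoid the difficulty in (ii), I would prove only what is needed, namely that $cl K(\delta S)$ is a subsemigroup. For $p,r\in cl K(\delta S)$, (i) already gives $pr\in cl K(\delta S)$, since $r\in\delta S$. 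This suffices for the lemma, and I expect this reduction to be the cleanest route.
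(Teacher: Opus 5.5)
Your proof is correct and follows essentially the paper's route. Your observation that $A\in\mathcal{K}$ iff $K(\delta S)\subseteq\overline{A}$ is the paper's identity $\mathcal{K}=\bigcap K(\delta S)$; from it the paper gets the filter property and $\overline{\mathcal{K}}=cl K(\delta S)$ by citing \cite[Theorem 3.20(b)]{hindalg}. Your continuity-of-$\rho_q$ argument is the proof of \cite[Theorem 2.15]{hindalg}, which the paper cites to show that $cl K(\delta S)$ is a right ideal, hence a subsemigroup, of $\delta S$. One labeling correction: what you call ``(i) Left ideal'' actually proves $cl K(\delta S)\cdot\delta S\subseteq cl K(\delta S)$, the right-ideal property, while your unfinished (ii) is the left-ideal property, which is not needed.
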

\begin{proof}
By the construction of $\mathcal{K}$ and Definition \ref{piecewise syndetic in partial}(2), we have
$\mathcal{K} = \bigcap K(\delta S)$. Using Theorem 3.20(b) of \cite{hindalg}, we have
$\mathcal{K}$ is a filter and $\overline{\mathcal{K}} = clK(\delta S)$. By \cite[Theorem 2.15]{hindalg}, $clK(\delta S)$
is a right ideal of $\delta S$, so in particular, $\overline{\mathcal{K}}$ is a compact subsemigroup of $\delta S$.
Therefore $clK(\delta S)) = \overline{\mathcal{K}}$ is a compact subsemigroup of $\beta S$.
\end{proof}

Finally, we shall end this with the dynamical characterization of quasi-central sets in adequate partial semigroups.
\begin{theorem}\label{4.11}  Let $(S, \cdot)$ be a commutative adequate partial semigroup and let $A \subseteq S$.
The set $A$ is quasi-central near $e$ if and only if there exists a dynamical system $(X,\langle T_s \rangle_{s \in S})$,
points $x$ and $y$ in $X$, and a neighbourhood $U$ of $y$ such that the
pair $(x,y)$ is jointly intermittently uniformly recurrent and $A = \{s \in S : T_s(x) \in U\}$.
\end{theorem}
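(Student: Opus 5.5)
The plan is to reduce the statement to Theorem~\ref{thm:main-result}, using the filter from Lemma~\ref{l 4.10}. The phrase ``near $e$'' is vestigial and should be ignored; ``quasi-central'' is meant in the sense of Definition~\ref{d 4.4}, namely membership in an idempotent of $cl K(\delta S)$. Set
\[
\mathcal{K}=\{B\subseteq S: S\setminus B \text{ is not piecewise syndetic}\}.
\]
By Lemma~\ref{l 4.10}, $\mathcal{K}$ is a filter with $\overline{\mathcal{K}}=cl K(\delta S)$, and $\overline{\mathcal{K}}$ is a compact subsemigroup of $\delta S$. These are exactly the hypotheses of Theorem~\ref{thm:main-result}. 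So $A$ is quasi-central if and only if there exist a dynamical system, points $x,y$ and a neighbourhood $U$ of $y$ such that $(x,y)$ is jointly $\mathcal{K}$-recurrent and $A=\{s:T_s(x)\in U\}$.

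It then remains to show that, for this $\mathcal{K}$, jointly $\mathcal{K}$-recurrent is the same as jointly intermittently uniformly recurrent (Definition~\ref{4.9}). This comes down to the set identity
\[
\mathcal{L}(\mathcal{K})=\{B\subseteq S: B \text{ is piecewise syndetic}\}.
\]
Unwinding the definitions, $B\in\mathcal{L}(\mathcal{K})$ means $S\setminus B\notin\mathcal{K}$, which means $S\setminus(S\setminus B)=B$ is piecewise syndetic. So the identity is immediate from the definition of $\mathcal{K}$. For a cross-check, use Theorem~\ref{t new 2.1}(1) together with Definition~\ref{piecewise syndetic in partial}(1): $B$ is piecewise syndetic exactly when $\overline{B}\cap K(\delta S)\neq\emptyset$, which is equivalent to $\overline{B}\cap cl K(\delta S)\neq\emptyset$ since $\overline{B}$ is open. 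This matches the description of $\mathcal{L}(\mathcal{K})$ as the sets meeting $\overline{\mathcal{K}}$. Substituting the identity into Definition~\ref{d 3.7} gives Definition~\ref{4.9} verbatim, and both directions of the theorem follow from Theorem~\ref{thm:main-result}.

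The main obstacle I expect is making sure Lemma~\ref{l 4.10} and Theorem~\ref{thm:main-result} actually hold in the partial setting. The essential point is the identification $\overline{\mathcal{K}}=cl K(\delta S)$, which rests on $\mathcal{K}=\bigcap K(\delta S)$, i.e.\ $\mathcal{K}=\bigcap\{p: p\in K(\delta S)\}$. I would verify this directly. First, $B\in\bigcap K(\delta S)$ iff $\overline{S\setminus B}\cap K(\delta S)=\emptyset$, iff $S\setminus B$ is not piecewise syndetic. Second, \cite[Theorem 3.20]{hindalg} says that for any subset $Y$ of $\beta S$, $\overline{\bigcap Y}=cl\,Y$. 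A second check is that $cl K(\delta S)$ is a subsemigroup. It is an ideal of $\delta S$, because $\rho_q$ is continuous and $\lambda_p$ is continuous for $p\in S$, in the standard way.

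In the forward direction of Theorem~\ref{thm:main-result} one must also use that the idempotent lies in $\delta S$. Then $T_r$ composes correctly by Corollary~\ref{action of delta s}. This ensures that joint recurrence, obtained from Lemma~\ref{lem:technical} via $T_r(x)=y=T_r(y)$, is valid here.
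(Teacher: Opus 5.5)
Your proposal is correct and is essentially the paper's proof: you take $\mathcal{K}$ to be the sets whose complements are not piecewise syndetic, note that $\mathcal{L}(\mathcal{K})$ is exactly the family of piecewise syndetic sets (so joint $\mathcal{K}$-recurrence coincides with joint intermittent uniform recurrence), use Lemma~\ref{l 4.10} to get $\overline{\mathcal{K}}=cl K(\delta S)$ as a compact subsemigroup of $\delta S$, and then apply Theorem~\ref{thm:main-result}. One small caution on your side check: the subsemigroup property needs only the right-ideal half, which follows from continuity of $\rho_q$ for $q\in\delta S$, whereas the usual left-ideal argument via $\lambda_x$ for $x\in S$ does not transfer as you state it, because $x\cdot q$ need not lie in $\delta S$.
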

\begin{proof}
 To prove this theorem, we use Theorem \ref{thm:main-result}.
 \\ Let $\mathcal{K} = \{B \subseteq S : S\setminus B \text{ is not a  piecewise syndetic setm of }S\}$.
Clearly $\mathcal{L}(\mathcal{K}) = \{A \subseteq S : A \text{ is a piecewise syndetic of }S\}$.
By Lemma  \ref{l 4.10}, we have $\mathcal{K}$ is a filter and $\overline{\mathcal{K}} = cl K(\delta S)$ which is a
compact subsemigroup of $\delta S$. Now we can apply Theorem \ref{thm:main-result} to prove our required statement.
\end{proof}

\end{document}